\newtheorem{theorem}{Theorem}
\newtheorem{proposition}{Proposition}
\newtheorem{lemma}{Lemma}
\newtheorem{defi}{Definition}
\newcommand{\p}{\Bbb{P}}
\newcommand{\tr}{\tt {t}}
\newcommand{\R}{\mathbb{R}}
\newcommand{\ud}{\mathrm{d}}
\definecolor{dgr}{rgb}{0.72, 0.53, 0.04}
\newcommand{\Ind}[1]{\boldsymbol{1}_{\{#1\}}}%Indicator function 
\begin{document}

\title{Backbone decomposition of multitype superprocesses.}
\author{ D. Fekete \footnote{ Department of Mathematical Sciences, University of Bath, Claverton Down, {\sc Bath, BA2 7AY, United Kingdom}. Email: d.fekete@bath.ac.uk,\, sp2236@bath.ac.uk}\, ,  S. Palau \footnotemark[1], J.C. Pardo \footnote{ {\sc Centro de Investigaci\'on en Matem\'aticas A.C. Calle Jalisco s/n. 36240 Guanajuato, M\'exico.} E-mail: jcpardo@cimat.mx, \, jl.garmendia@cimat.mx. } \, and J.L. P\'erez \footnotemark[2]}

\maketitle
\begin{abstract}
\bigskip
In this paper, we provide a construction of the so-called backbone decomposition for multitype supercritical   superprocesses.
While backbone decompositions are fairly well-known for both continuous-state branching processes and superprocesses in the one-type case, so far no such decompositions or even description of prolific genealogies have been given for the multitype cases. 

Here we focus on superprocesses, but by turning the movement off, we get the prolific backbone decomposition for multitype continuous-state branching processes as an easy consequence of our results.

\bigskip

\noindent {\sc Key words}: Multitype superprocesses, Multitype continuous-state branching processes, Non-local branching mechanism,  Backbone, Conditioning on extinction, Prolific individuals.\\
\noindent MSC 2000 subject classifications: 60J80, 60J68, 60E10.
\end{abstract}

\section{Introduction and main results.}
Motivated by the  distributional decomposition of supercritical superprocesses with quadratic branching mechanism presented in Evans and O'Connell, \cite{EO}  and the pathwise decomposition of Duquesne and Winkel \cite{DW} of continuous-state branching processes (CB-processes),  Berestycki et al. \cite{BKM} provided a pathwise construction of the so-called backbone decomposition for supercritical superprocesses.
The authors in \cite{BKM} showed that the superprocess can be written as the sum of two independent processes.
The first one is an initial burst of subcritical mass, while the second one is subcritical mass immigrating continuously and discontinuously along the path of a branching particle system called the {\it backbone} that we explain briefly below.

In Evans and O'Connell \cite{EO}  a distributional decomposition of supercritical superprocesses with quadratic spatially independent branching mechanism,   as sum of two independent processes,  was given. Later  Engl\"ander and Pinsky \cite{EP} provided a similar decomposition for the spatially dependent case.  In both constructions, the first process is a copy of the original process conditioned on extinction. The second process is understood as the aggregate accumulation of mass that has immigrated {\it continuously} along the path of an auxiliary dyadic branching particle diffusion which starts with a Poisson number of particles. Such embedded branching particle system was introduced as the {\it backbone}.

A pathwise backbone decomposition appears in Salisbury and Verzani \cite{SV}, who consider
the case of conditioning a super-Brownian motion as it exits a given domain such that the
exit measure contains at least $n$ pre-specified points in its support. There it was found
that the conditioned process has the same law as the superposition of mass that immigrates
in a Poissonian way along the spatial path of a branching particle motion which exits the
domain with precisely $n$ particles at the pre-specified points. Another pathwise backbone
decomposition for branching particle systems is given in Etheridge and Williams \cite{EW}, which
is used in combination with a limiting procedure to prove another version of Evan's immortal
particle picture.

Duquesne and Winkel \cite{DW}, in the context of L\'evy trees and with no spatial motion,  considered a similar decomposition for CB-processes whose branching mechanism $\psi$ satisfies that $0\le -\psi^\prime(0+)<\infty$ and the so-called Grey's condition
\[
\int^\infty \frac{\ud u}{\psi(u)} <\infty.
\]
 In this case the {\it backbone} corresponds to a continuous-time Galton-Watson process, and the general nature of the branching mechanism induces three different sorts of immigration. The {\it continuous immigration} is described by a Poisson point process of independent processes along the backbone, and the immigration mechanism is given by the so-called excursion measure which assigns zero initial mass and finite length to the immigration processes. The {\it discontinuous immigration} is provided by two sources of immigration. The first one is described again by a  Poisson point process of independent processes along the backbone where the immigration mechanism is given by the law of the original process conditioned on extinction, and with initial mass randomised by an infinite measure. The second  source of discontinuous immigration is given by independent copies of the original process conditioned on extinction, which are added to the backbone at its branching times, with randomly distributed initial mass that depends on the number of offspring at the branch point. 

In Berestycki et al. \cite{BKM}, a similar decomposition is provided for a class of superprocesses whose   branching mechanisms satisfy the same conditions as   those considered by Duquesne and Winkel. It is important to note that the authors in \cite{BKM} also  considered  supercritical CB-processes that, with positive probability, may die out without this ever happening in a finite time. This also allows the inclusion of branching mechanisms which are associated to CB-processes with paths of bounded variation which were excluded in \cite{DW}.  Kyprianou and Ren \cite{KR} look
at the case of a CB-process with immigration for which a similar
backbone decomposition to \cite{BKM} can be given.  Finally, backbone decompositions have also been  considered for  superprocesses with spatially dependent branching mechanisms  which are local, see  Kyprianou et al. \cite{KPeR} and Eckhoff et al. \cite{EKW}, and non-local, see Murillo-Salas and P\'erez \cite{MP} and Chen et al. \cite{CSY}. 

In this paper, we offer a similar construction for multitype  superprocesses whose  branching mechanisms are  general,  but  with the restriction of being spatially independent and having a  finite number of types.
While backbone decompositions are fairly well-known for both CB-processes and superprocesses in the one-type case, so far no such decompositions or even description of prolific genealogies (i.e. those individuals with infinite line of descent) have been given for multitype processes. Here we focus on superprocesses, but by turning the movement off, we get the prolific backbone decomposition for multitype continuous-state branching processes (MCB-processes) as an easy consequence of our results. 

Multitype superprocesses were first studied by Gorostiza and Lopez-Mimbela \cite{GL-M}  for the particular case of  quadratic branching. Later Li \cite{L92a} extended the notion of multitype superprocesses to  more general branching mechanisms (see also Section 6.2 in the monograph of Li \cite{Z}). Roughly speaking, the dynamics of the superprocesses introduced  by Li are as follows.  The movement of mass of a given  type is a Borel process,  the death and birth of mass of each type  are  associated with a spectrally positive L\'evy process. From a given type,  the creation of mass of  other types is given by  the law of a subordinator,  and  is distributed according to a discrete distribution that depends on the type. We are interested in a slightly more general superprocess where the discrete distributions are randomly chosen by a probability kernel that depends on the type. Thus the locations of non-locally displaced offspring involve two sources of randomness. One of the advantages of taking this general branching mechanism is that if there is no  spatial  motion, we recover the MCB-process  studied by Kyprianou et al. \cite{KPR}, which was properly defined by Li in Example 2.2 in \cite{Z}.

Kyprianou et al. \cite{KPR} studied the almost sure growth of supercritical MCB-processes  and implicitly described
a spine decomposition.  In \cite{KPR},  the authors show that a MCB-process  conditioned to  never get extinct is equal in law to the sum of an independent copy of the original process  and three different sources of immigration along a spine (continuous, discontinuous and in the times when the spine jumps). More precisely, the spine is given by a Markov chain,  the continuous and discontinuous immigrations  are described by a Poisson point process along the spine, where MCB-processes with the original  branching mechanism are immigrating with zero initial mass and with randomised initial mass, respectively. Due to the non-local nature of the
branching mechanism, an additional phenomenon occurs;  a positive random amount of mass immigrates off the spine each time it jumps from one state to another. Moreover, the distribution of the immigrating mass depends on where the spine jumped from and where it jumped to.

The backbone and spine decompositions  are quite different. In the backbone decomposition, the object that we dress is a multitype branching diffusion  while in the spine decomposition,  this object is a Markov chain which does not branch. Another difference is related to the immigration processes. In the spine decomposition, these are independent copies of the original process while in the backbone decomposition they are independent copies of the process conditioned to become extinct. In other words,  we can think of the backbone as all the particles that have an infinite genealogical line of descent, and of the spine as just one infinite line of descent.

\subsection{Multitype superprocesses.}

Before we introduce multitype  superprocesses and some of their properties,  we first recall some basic notation.
Let $\ell\in\mathbb{N}$ be a natural number,  and set $S=\{1,2,\cdots, \ell\}$.
We denote by $\mathcal{M}(\mathbb{R}^d)$, $\mathcal{B}(\mathbb{R}^d)$ and $\mathcal{B}^+(\mathbb{R}^d)$ the respective  spaces of finite Borel measures, bounded Borel functions and positive bounded Borel functions  on $\mathbb{R}^d$. The space $\mathcal{M}(\mathbb{R}^d)$ is endowed with the topology of  weak convergence. 

For $\boldsymbol{u},\boldsymbol{v}\in\mathbb{R}^\ell$, we introduce $[\boldsymbol{u},\boldsymbol{v}]=\sum_{j=1}^\ell u_j v_j$, and $\boldsymbol{u}\cdot \boldsymbol{v}$ as  the vector with entries $(\boldsymbol{u}\cdot \boldsymbol{v})_j = u_j v_j$. For  a  matrix $A$, we denote by $A^{\tr}$ its transpose. For any $\boldsymbol{f}=(f_1,\dots,f_\ell)^{\tr}\in\mathcal{B}(\mathbb{R}^d)^{\ell}$ and $\boldsymbol{\mu}=(\mu_1,\dots,\mu_\ell)^{\tr}\in \mathcal{M}(\mathbb{R}^d)^\ell$, we define
\[
\big\langle \boldsymbol{f},\boldsymbol{\mu}\big\rangle := \sum_{i=1}^\ell\int_{\mathbb{R}^d} f_i(x)\mu_i(\ud x).
\]
Furthermore,  we also use $| \boldsymbol{u}|:= [\boldsymbol{u}, \boldsymbol{u}]^{1/2}$ for the Euclidian norm of any $\boldsymbol{u}\in\mathbb{R}^\ell$, and $\|\boldsymbol{ \mu} \|:=\langle \boldsymbol{1},\boldsymbol{\mu}\rangle$ for the total mass of the measure $\boldsymbol{\mu}$.

Suppose that for any $i\in S$, the process $\xi^{(i)}=(\xi^{(i)}_t,t\geq 0)$ is a conservative diffusion with transition semigroup $(\mathtt{P}^{(i)}_t ,t\geq 0)$ on $\mathbb{R}^d$. We also introduce a vectorial function $\boldsymbol{\psi}:S\times \mathbb{R}^{\ell}_+\to \mathbb{R}^{\ell}$ such that 
\begin{equation}\label{branching mechanism}
\psi(i,\boldsymbol{\theta}):=- [\boldsymbol{\theta},{\boldsymbol{B}} \boldsymbol{e}_i ]+ \beta_i\theta_i^2 +\int_{\mathbb{R}^\ell_+}\left( \mathrm{e}^{- [\boldsymbol{\theta}, \boldsymbol{y}] }-1+\theta_i y_i \right)\Pi(i,\ud \boldsymbol{y}),\qquad \qquad \boldsymbol{\theta}\in\mathbb{R}^{\ell}_+, i \in S,
\end{equation}
where ${\boldsymbol{B}}$ is an  $\ell\times \ell$ real valued matrix such that ${B}_{ij}\boldsymbol{1}_{\{i\neq j\}}\in \R_+$, $\{\boldsymbol{e}_1,\dots, \boldsymbol{e}_\ell\}$  is the natural basis in $\mathbb{R}^\ell$, $\beta_i\in\mathbb{R}_+$, and $\Pi$ is a measure satisfying  the following integrability condition
\begin{equation*}
\int_{\mathbb{R}^\ell_+\setminus\{\boldsymbol{0}\}}\left((|\boldsymbol{y}|\wedge |\boldsymbol{y}|^2)+\underset{j\in S}{\sum}\Ind{j\neq i}y_j\right)\Pi(i,\ud \boldsymbol{y})<\infty, \qquad \textrm{for} \quad\ i\in S.
\end{equation*}
We call the  vectorial function $\boldsymbol{\psi}$  the branching mechanism and we also refer to  $\Pi$ as  its associated L\'evy measure.

The  first result that we present here says  that  multitype superprocesses associated to the branching mechanism $\boldsymbol{\psi}$ and the diffusions $\{\xi^{(i)}, i\in S\}$ are well-defined. Its proof is based on similar arguments as those used to prove Theorem 6.4 in Li \cite{Z}, for completeness we present its proof in Section \ref{proofs}.
 
\begin{proposition}\label{Proposition1}
	There is a strong Markov process $\boldsymbol{X}=(\boldsymbol{X}_t, (\mathcal{F}_t)_{t\ge 0},\mathbb{P}_{\boldsymbol{\mu}})$ with state space $\mathcal{M}(\mathbb{R}^d)^\ell$ and transition probabilities defined by
	\begin{align}\label{laplace}
	\mathbb{E}_{\boldsymbol{\mu}}\left[\mathrm{e}^{-\langle \boldsymbol{f},\boldsymbol{X}_t\rangle}\right]=\exp\Big\{-\langle \boldsymbol{V}_t\boldsymbol{f},\boldsymbol{\mu}\rangle\Big\},\qquad \boldsymbol{\mu}\in\mathcal{M}(\mathbb{R}^d)^\ell,
	\end{align}
	where $\boldsymbol{f}\in\mathcal{B}^+(\mathbb{R}^d)^\ell$ and  $\boldsymbol{V}_t\boldsymbol{f}(x)=({V}^{(1)}_t\boldsymbol{f}(x),\cdots,{V}^{(\ell)}_t\boldsymbol{f}(x))^{\tr}:\R^{d}\rightarrow\R_+^\ell$ 
	is the unique locally bounded solution to the vectorial integral equation
	\begin{equation}\label{inteq_u}
{V}^{(i)}_t\boldsymbol{f}(x)=\mathtt{P}^{(i)}_t f_i(x)-\int_0^t\ud s\int_{\R^d}\psi(i,\boldsymbol{V}_{t-s}\boldsymbol{f}(y))\mathtt{P}^{(i)}_s(x,\ud y),\qquad i\in S.
			\end{equation}
\end{proposition}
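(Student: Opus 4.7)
The plan follows the blueprint of Theorem 6.4 in Li \cite{Z}, adapted to the multitype, vectorial setting. The argument splits naturally into four steps: (i) solve the integral equation \eqref{inteq_u}; (ii) verify that the operator family $\{\boldsymbol{V}_t : t\ge 0\}$ is a cumulant semigroup, i.e.\ that $\exp\{-\langle \boldsymbol{V}_t \boldsymbol{f},\boldsymbol{\mu}\rangle\}$ is indeed a Laplace functional of a probability measure on $\mathcal{M}(\mathbb{R}^d)^\ell$; (iii) extract from this a Markov transition kernel on $\mathcal{M}(\mathbb{R}^d)^\ell$ and construct the finite-dimensional distributions via the Kolmogorov extension theorem; (iv) obtain a c\`adl\`ag version and promote the Markov property to the strong Markov property.

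For step (i), I would observe that the integrability assumption on $\Pi(i,\ud\boldsymbol{y})$ guarantees that each coordinate $\psi(i,\boldsymbol{\theta})$ is well defined and locally Lipschitz on bounded subsets of $\mathbb{R}^\ell_+$: the quadratic term is smooth, the linear drift $-[\boldsymbol{\theta},\boldsymbol{B}\boldsymbol{e}_i]$ is Lipschitz, and the non-local term is Lipschitz on $[0,K]^\ell$ for every $K>0$ by the bound $|\mathrm{e}^{-[\boldsymbol{\theta},\boldsymbol{y}]}-1+\theta_i y_i|\le C(|\boldsymbol{y}|\wedge|\boldsymbol{y}|^2)+\sum_{j\neq i}\theta_j y_j$, combined with the integrability hypothesis. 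A standard Picard iteration in the Banach space of bounded measurable functions $\boldsymbol{f}:\mathbb{R}^d\to\mathbb{R}^\ell_+$ with the sup norm then yields existence and uniqueness of a locally bounded non-negative solution $\boldsymbol{V}_t\boldsymbol{f}$ on any time interval $[0,T]$; the standard comparison/Gronwall argument uses positivity of the off-diagonal entries $B_{ij}$ ($i\neq j$) and the structure of $\psi$ to keep the iterates non-negative and locally bounded.

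For step (ii), uniqueness of \eqref{inteq_u} immediately gives the semigroup identity $\boldsymbol{V}_{t+s}\boldsymbol{f}=\boldsymbol{V}_t(\boldsymbol{V}_s\boldsymbol{f})$. The delicate point is to show that $\boldsymbol{f}\mapsto\langle \boldsymbol{V}_t\boldsymbol{f},\boldsymbol{\mu}\rangle$ is a negative-definite functional on $\mathcal{B}^+(\mathbb{R}^d)^\ell$ for every $\boldsymbol{\mu}$, so that the right-hand side of \eqref{laplace} is the Laplace transform of a probability measure on $\mathcal{M}(\mathbb{R}^d)^\ell$. I would do this by approximation: first truncate $\Pi$ and replace $\beta_i$ by small parameters to obtain a bounded branching mechanism for which the corresponding multitype superprocess is constructed directly as the short-time limit of a branching particle system (each particle of type $i$ moves according to $\xi^{(i)}$, dies at rate depending on the truncated mechanism, and produces a vector of non-local offspring distributed according to $\Pi(i,\cdot)$ and the subordinator-like jumps coming from $B_{ij}$, $j\neq i$); for these mechanisms the Laplace-functional identity is verified by computing the branching PDE explicitly. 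Passing to the limit as the truncation is removed preserves infinite divisibility, and the integral equation \eqref{inteq_u} is stable under this limit by dominated convergence combined with the integrability of $\Pi$.

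Once $\boldsymbol{V}_t$ is identified as a cumulant semigroup, step (iii) is routine: define the transition kernel $Q_t(\boldsymbol{\mu},\cdot)$ on $\mathcal{M}(\mathbb{R}^d)^\ell$ via \eqref{laplace}, check that finite-dimensional distributions are consistent using $\boldsymbol{V}_{t+s}=\boldsymbol{V}_t\boldsymbol{V}_s$, and apply Kolmogorov's theorem. For step (iv), Feller-type continuity of $t\mapsto \boldsymbol{V}_t\boldsymbol{f}$ (inherited from the continuity of the transition semigroups $\mathtt{P}_t^{(i)}$ and the continuity of $\psi$) yields weak continuity of $t\mapsto Q_t$, which, together with the standard Kolmogorov--Totoki regularization argument on the Polish space $\mathcal{M}(\mathbb{R}^d)^\ell$ endowed with the weak topology, provides a c\`adl\`ag modification and the strong Markov property. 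The main obstacle throughout is step (ii): matching the vectorial non-local structure of $\psi$, in particular the cross-type immigration encoded by the off-diagonal entries of $\boldsymbol{B}$ and by $\Pi(i,\cdot)$ living on $\mathbb{R}^\ell_+$, to a canonical branching-particle approximation. Once this is in place, everything else follows along the same lines as in the single-type construction of Li \cite{Z}.
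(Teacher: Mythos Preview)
Your outline is plausible and, carried out carefully, would yield a correct proof; but it is substantially more laborious than the paper's argument, and the extra work is avoidable. The paper does not redo the single-type theory in a vectorial setting. Instead it observes that an $\ell$-type superprocess on $\mathbb{R}^d$ is the \emph{same object} as a one-type superprocess on the enlarged state space $\mathbb{R}^d\times S$: one defines the spatial motion $\Xi$ on $\mathbb{R}^d\times S$ by $\mathtt{T}_t f(x,i)=\int_{\mathbb{R}^d} f(y,i)\,\mathtt{P}^{(i)}_t(x,\ud y)$ (so the type coordinate is frozen by the motion), and the branching mechanism by $\Psi(x,i,f)=\psi(i,(f(x,1),\dots,f(x,\ell)))$. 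One then checks that $\Psi$ fits the general (non-local) form of equation (2.26) in Li~\cite{Z}, so that Theorems 2.21 and 5.6 there apply directly and deliver a strong Markov superprocess $\mathcal{Z}$ on $\mathcal{M}(\mathbb{R}^d\times S)$. The homeomorphism $\mu\mapsto(\mu(\cdot\times\{i\}))_{i\in S}$ between $\mathcal{M}(\mathbb{R}^d\times S)$ and $\mathcal{M}(\mathbb{R}^d)^\ell$ then transports $\mathcal{Z}$ to the desired $\boldsymbol{X}$, and \eqref{laplace}--\eqref{inteq_u} drop out by unpacking the definitions.

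The practical difference is precisely at your step (ii), which you correctly flag as the main obstacle: matching the cross-type structure (off-diagonal $B_{ij}$ and the full-support L\'evy measure $\Pi(i,\ud\boldsymbol{y})$ on $\mathbb{R}^\ell_+$) to a particle approximation, and proving that the limit is a genuine cumulant semigroup. The paper's reduction sidesteps this entirely: once the problem is recast on $\mathbb{R}^d\times S$, the cross-type creation becomes ordinary non-local branching in the sense already covered by Li's general framework, and no new approximation argument is needed. Your approach buys self-containment at the cost of repeating a nontrivial chunk of \cite{Z}; the paper's buys brevity by recognising that the multitype structure is a special case of non-local branching on a product space.
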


 \begin{defi}
The process $\boldsymbol{X}$ is called a $(\boldsymbol{\mathtt{P}},\boldsymbol{\psi})$-mutitype superprocess with $\ell$ types and with law given by $\mathbb{P}_{\boldsymbol{\mu}}$ for each initial configuration $\boldsymbol{\mu}\in \mathcal{M}(\mathbb{R}^d)^\ell$. 
\end{defi}

Our definition is consistent with the multitype superprocesses that appear in the literature. Indeed, we observe that the multitype superprocesses considered  by  Gorostiza and Lopez-Mimbela \cite{GL-M} are associated 
with the branching mechanism
$$	\psi(i,\boldsymbol{\theta})=- d_i[\boldsymbol{\theta},
\boldsymbol{\pi}^{(i)}]+ \beta_i\theta_i^2, $$
where $d_i,\beta_i\in\R_+$,  $\boldsymbol{\pi}^{(i)}=\{\pi^{(i)}_j, j\in S\}$ is a probability distribution on $S$, and the spatial movement is driven by the family $\{\xi^{(i)}, i\in S\}$ of symmetric stable processes.
 Li \cite{L92a} (see also Section 6.2 in \cite{Z}) introduced  multitype superprocesses with spatial movement driven by  Borel right processes and whose branching mechanism is of the form
\begin{equation*}
\begin{split}
\psi(i,\boldsymbol{\theta})&=b_{i}\theta_i+\beta_i\theta_i^2-d_{i}[\boldsymbol{\theta}, \boldsymbol{\pi}^{(i)}]+\int_{\mathbb{R}_+}\left( \mathrm{e}^{- u\theta_i }-1+\theta_i u \right)l(i,\ud u)+\int_{\mathbb{R}_+}\left( \mathrm{e}^{- u[\boldsymbol{\theta}, \boldsymbol{\pi}^{(i)}] }-1 \right)n(i,\ud u),
\end{split}
\end{equation*}
where $b_i,d_i,\beta_i\in\R_+$, $\boldsymbol{\pi}^{(i)}=\{\pi^{(i)}_j, j\in S\}$ is a probability distribution on $S$, and $l(i,\ud u)$, $ n(i,\ud u)$ are  measures on $\R_+$ satisfying  
\[
\int_{\R_+}(u\wedge u^2)l(i,\ud u)<\infty\qquad  \textrm{and}\qquad  \int_{\R_+}u \,n(i,\ud u)<\infty,
\]
that represent the local and non-local kernels, respectively. The latter branching mechanism can be rewritten in the form of  \eqref{branching mechanism} by taking $B_{ji}:=-b_i\Ind{i=j}+d_i\pi^{(i)}_j$,  and 
$$\Pi(i,\ud \boldsymbol{y})=\Ind{\boldsymbol{y}=u\boldsymbol{e}_i}l(i,\ud u)+\Ind{\boldsymbol{y}=u\boldsymbol{\pi}^{(i)}}n(i,\ud u).$$

It is important to note that if the branching mechanism is given as in \eqref{branching mechanism} and there is no  spatial movement, then the associated total mass of a superprocess is a  MCB-process,  see for instance Example 2.2 in \cite{Z}. Indeed, it is not difficult to see that the total mass vector of a multitype superprocess is a MCB-process. Recall that an $\ell$-type MCB-process $\boldsymbol{Y}=(\boldsymbol{Y}_t,t\geq 0)$  with branching mechanism $\boldsymbol{\psi}$ can be characterised through its Laplace transform.
If we denote by $\mathbf{P}_{\boldsymbol{y}}$ the law of such a process with initial state $\boldsymbol{y}\in\mathbb{R}_+^\ell$, then 
\begin{equation}\label{ec mcsb}
\mathbf{E}_{\boldsymbol{y}}\left[ \mathrm{e}^{-[\boldsymbol{\theta},\boldsymbol{Y}_t]}\right]=\exp\big\{- [\boldsymbol{y},\boldsymbol{v}_t(\boldsymbol{\theta})]\big\},\qquad \textrm{for }\quad\boldsymbol{\theta}\in\mathbb{R}_+^\ell, t\ge 0,
\end{equation}
where 
\[
t\mapsto\boldsymbol{v}_t(\boldsymbol{\theta})=(\boldsymbol{v}_t(1,\boldsymbol{\theta})\dots,\boldsymbol{v}_t(\ell, \boldsymbol{\theta}))^{\tr}
\]
is the unique locally bounded  solution, with non-negative entries,  to the system of integral equations
\begin{equation}\label{ec v for mcsb}
\boldsymbol{v}_t(i,\boldsymbol{\theta})=\theta_i-\int_0^t \psi(i, \boldsymbol{v}_{t-s}(\boldsymbol{\theta}))\mathrm{d}s, \quad i\in S.
\end{equation}
Suppose that $(\boldsymbol{X}_t,\p_{\boldsymbol{\mu}})_{t\geq 0}$ is a $(\boldsymbol{\mathtt{P}},\boldsymbol{\psi})$-multitype superprocess and define the total mass vector as  $\boldsymbol{Y}=(\boldsymbol{Y}_t,t\geq 0)$  with entries 
$$Y_t(i)=X_t(i,\R^d)=\int_{\R^d}	X_t(i,\ud x), \qquad t\geq 0,$$
 and initial vector $\boldsymbol{\mu}=(\mu_1(\R^d),\cdots,\mu_{\ell}(\R^d))^{\tr}$. 
Let $\boldsymbol{\theta}\in \mathbb{R}_+^{\ell}$,  and take  $f_i(x)=\theta_i$ for each $i\in S, x\in \R^d$.  Since the branching mechanism  and the vector $\boldsymbol{\theta}$ are spatially independent, the system of functions $\boldsymbol{V}_t\boldsymbol{\theta}$ that satisfies \eqref{inteq_u} does not depend on $x\in\mathbb{R}^d$. In other words
\[
\begin{split}
{V}_t^{(i)}\boldsymbol{\theta}&=\mathtt{P}^{(i)}_t\theta_i-\int_0^t\ud s\int_{\R^d}\psi(i,\boldsymbol{V}_{t-s}\boldsymbol{\theta})\mathtt{P}^{(i)}_s(x,\ud y)\\
&=\theta_i-\int_0^t\psi(i,\boldsymbol{V}_{t-s}\boldsymbol{\theta})\ud s, \quad i\in S.
\end{split}
\]
Recall that the previous system of equations  has a unique solution, therefore $\boldsymbol{V}_t\boldsymbol{\theta}=\boldsymbol{v}_t(\boldsymbol{\theta})$ for any $x\in\mathbb{R}^d$. By \eqref{laplace} and the relationship between  $\boldsymbol{X}$ and $\boldsymbol{Y}$, the total mass vector is indeed a MCB-process.

Since the   total mass vector of a multitype superprocess is a MCB-process, we can determine its asymptotic behaviour  through its first moment, similarly to  the one-type case. More precisely, 
denote by $\boldsymbol{M}(t)$ the $\ell\times \ell$ matrix with elements
\[
{M}(t)_{ij}=\mathbb{E}_{\boldsymbol{e}_i \delta_x}\Big[\langle \boldsymbol{e}_j, \boldsymbol{X}_t\rangle\Big], \quad i,j\in S,
\]
where $\boldsymbol{e}_i\delta_x$ denotes a measure valued vector that has unit mass at position $x\in\mathbb{R}^d$, in the $i$-th coordinate, and  zero mass everywhere else.

Barczy et al. \cite{BLP} (see Lemma 3.4) proved that the mean matrix $\boldsymbol{M}(t)$ can be written in terms of the branching mechanism $\boldsymbol{\psi}$. In other words, for all $t>0$
$$ \boldsymbol{M}(t)=\mathrm{e}^{-t\widetilde{\boldsymbol{B}}^{\tr} },$$ where the matrix $\widetilde{\boldsymbol{B}}$ is given by
$$\widetilde{B}_{i,j}={B}_{i,j}+\int_{\mathbb{R}^\ell_+}(y_i-\delta_{i,j})^+\Pi(j,\ud \boldsymbol{y}),$$
where $(a)^+=a\lor 0$, denotes the positive part of $a$.  Moreover, after straightforward computations (see for instance the computations after identity (2.15) in \cite{BLP}) we observe that the branching mechanism $\boldsymbol{\psi}$ can be rewritten as 
follows
\begin{equation}\label{psiBtilde}
\psi(i,\boldsymbol{\theta}):=- [\boldsymbol{\theta},\widetilde{\boldsymbol{B}} \boldsymbol{e}_i ]+ \beta_i\theta_i^2 +\int_{\mathbb{R}^\ell_+}\left( \mathrm{e}^{- [\boldsymbol{\theta}, \boldsymbol{y}] }-1+[\boldsymbol{\theta}, \boldsymbol{y}] \right)\Pi(i,\ud \boldsymbol{y}),\qquad \qquad \boldsymbol{\theta}\in\mathbb{R}^{\ell}_+, i \in S.
\end{equation}
In the sequel, we assume that the matrix $\widetilde{\boldsymbol{B}}^{\tr}$ is irreducible,  and therefore the mean matrix $\boldsymbol{M}$ is irreducible as well. Then classical Perron-Frobenius theory guarantees that there exists  a unique leading eigenvalue $\Gamma$, and  right  and left eigenvectors ${\tt u},  {\tt v}\in\R_+^\ell$,   whose coordinates are  strictly positive such that, for $t\ge 0,$
\[
\boldsymbol{M}(t){\tt u}=\mathrm{e}^{\Gamma t}{\tt u}, \qquad \widetilde{\boldsymbol{B}}^{\tr}{\tt u}=\Gamma{\tt u}, \qquad {\tt v}^{\tr}\boldsymbol{M}(t)=\mathrm{e}^{\Gamma t}{\tt v}^{\tr},  \qquad \textrm{and}\qquad  {\tt v}^{\tr}\widetilde{\boldsymbol{B}}^{\tr}=\Gamma {\tt v}^{\tr}.
\]
It is important to note,  that since the branching mechanism is spatially independent,  the value of $\Gamma$ does not depend on the spatial variable. 

Moreover, $\Gamma$  determines the long term behaviour of $\boldsymbol{X}$. Indeed, employing the same terminology as in the one-type  case,  we call the process supercritical, critical or subcritical accordingly as $\Gamma$ is strictly positive, equal to zero or strictly negative.  In Kyprianou and Palau \cite{KP},  the authors show  that when $\Gamma\leq 0$ the total mass goes to zero almost surely.  Barczy and Pap \cite{BP} show that if $\Gamma>0$, then the total mass process satisfies 
	$$\underset{t\rightarrow\infty}{\lim} \mathrm{e}^{-\Gamma t}\ \mathbf{E}_{\boldsymbol{e}_i}\left[ \boldsymbol{Y}_t\right]=\boldsymbol{e}_i^{\tr} {\tt u}{\tt v}^{\tr},\qquad \mbox{for } i\in S,$$
 which is a non-zero vector.  In particular, we deduce that 
 \begin{equation}\label{absortionle1}
 \mathbb{P}_{\boldsymbol{e}_i\delta_{x}}\Big(\lim_{t\rightarrow\infty}\|\boldsymbol{X}_t\|=0\Big)<1, \qquad \mbox{for } i\in S,\quad x\in \mathbb{R}^d.
 \end{equation} 

Here, we are also interested in the case that  extinction occurs in finite time. More precisely,  let us define $\mathcal{E}:=\{\|\boldsymbol{X}_t\|=0 \mbox{ for some } t>0 \}$, the event of {\it extinction}
and take $w_i:\mathbb{R}^d\mapsto \mathbb{R}_+$ be the function such that
\begin{equation}\label{w} 
w_i(x):=-\log \mathbb{P}_{\boldsymbol{e}_i\delta_x}(\mathcal{E}),\quad i\in S.
\end{equation}
Since the branching mechanism is spatially independent, and  the total mass vector of $\boldsymbol{X}$ is a MCB-process,  we get that $w_i(x)=w_i$, for all $x\in\mathbb{R}^d$, for some constant $w_i$.

 In what follows, we assume 
 \begin{equation}
	\label{extinction assumption}
	0<w_i<\infty, \qquad \mbox{ for all } i\in S.
	\end{equation}
 Assumption \eqref{extinction assumption} or similar assumptions have been used in most of the cases where backbones have been constructed. For instance in \cite{BKM} and \cite{DW}, the authors assume Grey's condition which is equivalent to $w_i$ being finite. In \cite{CSY,EKW, KPR, MP} a very similar  condition appears for the spatially dependent case. Assumption \eqref{extinction assumption} is not only necessary for the construction of the multitype superprocess conditioned on extinction but also for the construction of the so-called Dynkin-Kuznetsov measure, as we will see below.
 
 On the other hand,  assumption \eqref{extinction assumption} is not very restrictive. For instance, it is satisfied if  $\Gamma> 0$ and $\beta:=\inf_{i\in S}\beta_i>0$. Indeed  from \eqref{absortionle1}, we see that 
	$$\mathbb{P}_{\boldsymbol{e}_i\delta_x}(\mathcal{E})\leq\mathbb{P}_{\boldsymbol{e}_i\delta_x}\Big(\lim_{t\rightarrow\infty}\|\boldsymbol{X}_t\|=0\Big)<1.$$
	From \eqref{ec mcsb} and the fact that the total mass is a MCB-process, it is clear that 
	$$\mathbb{P}_{\boldsymbol{e}_i\delta_x}\Big(\|\boldsymbol{X}_t\|=0\Big)=\exp\left\{-\lim_{\boldsymbol{\theta}\hookrightarrow\infty} v_t(i,\boldsymbol{\theta})\right\},$$
	where $\boldsymbol{v}_t(i,\boldsymbol{\theta})$ is  given by \eqref{ec v for mcsb} and $\boldsymbol{\theta}\hookrightarrow\infty$ means that each coordinate of $\boldsymbol{\theta}$ goes to $\infty$. In other words, if we show that 
	$$\lim_{t\rightarrow\infty}\lim_{\boldsymbol{\theta}\hookrightarrow\infty} v_t(i,\boldsymbol{\theta})<\infty \qquad \mbox{ for all } i\in S,$$
	then we have that  \eqref{extinction assumption} holds. In order to prove that the above limit is finite, we introduce  
	$$A_t(\boldsymbol{\theta}):=\sup_{i\in S}\frac{v_t(i,\boldsymbol{\theta})}{{\tt u}_i},$$
	where ${\tt u}_{i}$ denotes the $i$-th coordinate of the right eigenvector associated to $\Gamma$. Since the supremum of finitely many continuously differentiable functions is differentiable except at  most countably many isolated points, we  may fix $t\geq 0$ such that $A_t(\boldsymbol{\theta})$ is differentiable  at $t$ and select $i$ in such a way that $A_t(\boldsymbol{\theta}){\tt  u}_{i}=v_t(i,\boldsymbol{\theta})$.
	Then by using \eqref{ec v for mcsb} and  \eqref{psiBtilde}  we can deduce that
	\begin{equation*}
	\begin{split}
	{\tt u}_{i}\frac{\ud}{\ud t}A_t(\boldsymbol{\theta})=&\frac{\ud}{\ud t}v_t(i,\boldsymbol{\theta})=\underset{j\in S}{\sum}\widetilde
	{B}_{ji}{\tt u}_j\frac{\boldsymbol{v}_t(j,\boldsymbol{\theta})}{{\tt u}_j}- \beta_i(\boldsymbol{v}_t(i,\boldsymbol{\theta}))^2 -\int_{\mathbb{R}^\ell_+}\left( \mathrm{e}^{- [\boldsymbol{v}_t(\boldsymbol{\theta}), \boldsymbol{y}] }-1+[\boldsymbol{v}_t(\boldsymbol{\theta}), \boldsymbol{y}] \right)\Pi(i,\ud \boldsymbol{y}).
	\end{split}
	\end{equation*}
	Since $1-x-\mathrm{e}^{-x}\leq 0$, for all $x>0$, $\widetilde{B}_{i,j}\Ind{i\neq j}>0$ and $A_t(\boldsymbol{\theta}){\tt u}_i=v_t(i,\boldsymbol{\theta})$, we have 
	$${\tt u}_{i}\frac{\ud}{\ud t}A_t(\boldsymbol{\theta})\leq A_t(\boldsymbol{\theta})\underset{j\in S}{\sum}\widetilde
	{B}_{ji}{\tt u}_{j}- \beta_i(A_t(\boldsymbol{\theta}){\tt u}_{i})^2 =A_t(\boldsymbol{\theta})(\widetilde{\boldsymbol{B}}^{\tr}{\tt u})_i- \beta_i(A_t(\boldsymbol{\theta}){\tt u}_{i})^2.$$
	Next, we use that ${\tt u}$ is an eigenvector of $\widetilde{\boldsymbol{B}}^{\tr}$ to get
	$${\tt u}_{i}\frac{\ud}{\ud t}A_t(\boldsymbol{\theta})\leq A_t(\boldsymbol{\theta})\Gamma {\tt u}_{i}- \beta_i(A_t(\boldsymbol{\theta}){\tt u}_{i})^2.$$
	By defining $\underline{{\tt u}}:=\inf_{i\in S}{\tt u}_{i}$ and recalling the definition of $\beta$, the previous identity implies
	$$\frac{\ud}{\ud t}A_t(\boldsymbol{\theta})\leq A_t(\boldsymbol{\theta})\Gamma- \beta\underline{{\tt u}}(A_t(\boldsymbol{\theta}))^2.$$
	Since, $\Gamma, \beta$ and $\underline{{\tt u}}$ are strictly  positive, an integration by parts allow us to deduce that 
	$$A_t(\boldsymbol{\theta})\leq \frac{\Gamma\mathrm{e}^{\Gamma t}}{\frac{\Gamma}{A_0(\boldsymbol{\theta})}+\beta\underline{{\tt u}}(\mathrm{e}^{\Gamma t}-1)}.$$
	Finally, if we define  $\overline{\tt u}:=\sup_{i\in S}{\tt u}_i$,  the  previous computations lead  to 
	$$w_i=\lim_{t\rightarrow\infty}\lim_{\boldsymbol{\theta}\hookrightarrow\infty} v_t(i,\boldsymbol{\theta})\leq \overline{\tt u}\lim_{t\rightarrow\infty}\lim_{\boldsymbol{\theta}\hookrightarrow\infty}A_t(\boldsymbol{\theta})\leq \frac{\overline{\tt u}\Gamma}{\beta\underline{\tt u}}<\infty.$$

 The following result is also needed for  constructing  the associated Dynkin-Kuznetsov measures  which provide a way to dress the backbone.
	\begin{proposition}\label{finite extinction}
		Suppose that condition \eqref{extinction assumption} holds, then $\boldsymbol{\psi}(\boldsymbol{w})=\boldsymbol{0}$. Moreover, for $x\in\R^d,\ i\in S $ and $t>0$, we have that 
		\begin{equation*}
		\mathbb{P}_{\boldsymbol{e}_i\delta_x}(\|\boldsymbol{X}_t\|=0 )>0.
		\end{equation*}
	\end{proposition}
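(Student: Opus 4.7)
The plan proceeds in two parts, reducing the proposition to properties of the MCB-process of total mass.

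First, because $\boldsymbol{Y}_t$ is an MCB-process, the branching property gives $\mathbb{P}_{\boldsymbol{e}_i\delta_x}(\|\boldsymbol{X}_t\|=0)=\exp(-\overline{v}_t(i))$, where $\overline{v}_t(i):=\lim_{\boldsymbol{\theta}\hookrightarrow\infty}v_t(i,\boldsymbol{\theta})$ is the (monotone) limit of the solution to \eqref{ec v for mcsb}. Since $\boldsymbol{0}$ is an absorbing state, the events $\{\|\boldsymbol{X}_t\|=0\}$ are increasing in $t$ with union $\mathcal{E}$, hence $\overline{v}_t(i)\downarrow w_i$ as $t\to\infty$; under assumption \eqref{extinction assumption} each $w_i$ is finite and positive.

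For the identity $\boldsymbol{\psi}(\boldsymbol{w})=\boldsymbol{0}$, start from the semigroup relation $\boldsymbol{v}_{t+s}(\boldsymbol{\theta})=\boldsymbol{v}_t(\boldsymbol{v}_s(\boldsymbol{\theta}))$ satisfied by the solutions of \eqref{ec v for mcsb}. Since $\boldsymbol{v}_t(\cdot)$ is nondecreasing in its argument (a direct consequence of the stochastic monotonicity of $\boldsymbol{Y}_t$ in the initial condition), monotone convergence lets me send $\boldsymbol{\theta}\hookrightarrow\infty$ on both sides to obtain $\overline{\boldsymbol{v}}_{t+s}=\boldsymbol{v}_t(\overline{\boldsymbol{v}}_s)$, valid for all $s$ large enough that $\overline{\boldsymbol{v}}_s$ is finite. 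Letting $s\to\infty$ and invoking continuity of $\boldsymbol{v}_t$ at finite inputs yields the fixed-point equation $\boldsymbol{w}=\boldsymbol{v}_t(\boldsymbol{w})$ for every $t\geq 0$. Substituting this into \eqref{ec v for mcsb} gives $w_i = w_i - t\psi(i,\boldsymbol{w})$, whence $\psi(i,\boldsymbol{w})=0$ for each $i\in S$.

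The strict positivity assertion is equivalent to $\overline{v}_t(i)<\infty$ for every $t>0$. I would adapt the differential-inequality calculation performed just above the proposition statement: set $\Lambda_t(\boldsymbol{\theta}):=\sup_{i\in S}v_t(i,\boldsymbol{\theta})/{\tt u}_i$, differentiate at points where $\Lambda_t$ is smooth and the supremum is attained by some index $i$, and combine the ODE $\partial_t v_t(i,\boldsymbol{\theta})=-\psi(i,\boldsymbol{v}_t(\boldsymbol{\theta}))$ with the representation \eqref{psiBtilde}, the Perron-Frobenius identity $\widetilde{\boldsymbol{B}}^{\tr}{\tt u}=\Gamma{\tt u}$, and the nonnegativity of the non-local integrand to derive a super-linear differential bound on $\Lambda_t$. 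Integration then produces a bound on $\Lambda_t(\boldsymbol{\theta})$ that is uniform in the initial condition for every fixed $t>0$, so that $\overline{v}_t(i)\leq \overline{{\tt u}}\,\limsup_{\boldsymbol{\theta}\hookrightarrow\infty}\Lambda_t(\boldsymbol{\theta})<\infty$, as required.

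The main obstacle is that the explicit bound carried out in the excerpt relied on $\beta>0$: for a generic mechanism satisfying only \eqref{extinction assumption}, the pure quadratic term need not dominate, and one must exploit the jump contribution of $\Pi$. The fixed-point identity $\boldsymbol{\psi}(\boldsymbol{w})=\boldsymbol{0}$ established in the first part should be used to extract the requisite super-linear growth of $\psi$ above $\boldsymbol{w}$, and the irreducibility of $\widetilde{\boldsymbol{B}}^{\tr}$ will likely be needed to rule out the pathological possibility that $\overline{v}_t(i)$ is infinite in a strict subset of coordinates.
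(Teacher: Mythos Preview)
Your fixed-point argument for $\boldsymbol{\psi}(\boldsymbol{w})=\boldsymbol{0}$ is correct and parallels the paper's: there the Markov property applied to $\mathcal{E}$ gives $\mathrm{e}^{-\langle\boldsymbol{w},\boldsymbol{\mu}\rangle}=\mathbb{E}_{\boldsymbol{\mu}}[\mathrm{e}^{-\langle\boldsymbol{w},\boldsymbol{X}_t\rangle}]$ directly, i.e.\ $\boldsymbol{V}_t\boldsymbol{w}=\boldsymbol{w}$, and \eqref{inteq_u} then forces $\boldsymbol{\psi}(\boldsymbol{w})=\boldsymbol{0}$. Your detour through the flow relation $\boldsymbol{v}_{t+s}=\boldsymbol{v}_t\circ\boldsymbol{v}_s$ reaches the same conclusion.

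The second part, however, has a real gap. The differential-inequality scheme you invoke produces a $\boldsymbol{\theta}$-uniform bound on $\Lambda_t(\boldsymbol{\theta})$ for each fixed $t>0$ only because the quadratic term $\beta_i(v_t(i,\boldsymbol{\theta}))^2$ dominates; without $\inf_i\beta_i>0$ you propose to substitute the jump integral, but that contribution depends on the whole vector $\boldsymbol{v}_t$ rather than on the scalar $\Lambda_t$, and nothing in \eqref{extinction assumption} prevents some individual type $i$ from having $\beta_i=0$ and $\Pi(i,\cdot)\equiv 0$ simultaneously (the nonlinearity needed for $\boldsymbol{\psi}(\boldsymbol{w})=\boldsymbol{0}$ being carried entirely by other types). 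Closing a scalar Riccati-type inequality in that generality is not routine, and you do not actually carry it out; the appeal to irreducibility and to $\boldsymbol{\psi}(\boldsymbol{w})=\boldsymbol{0}$ remains a wish rather than an argument.

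The paper takes a completely different, purely probabilistic route that avoids any Grey-type estimate. It sets $T_i$ to be the threshold time such that $\mathbf{P}_{\boldsymbol{e}_i}(\|\boldsymbol{Y}_t\|=0)=0$ for $t<T_i$ and $>0$ for $t>T_i$ (finite by \eqref{extinction assumption}), partitions $S$ into $S_1=\{i:T_i=0\}$ and $S_2=\{i:T_i>0\}$, and argues by contradiction that $S_2=\emptyset$. If not, put $T=\min_{i\in S_2}T_i>0$. For $i\in S_2$, applying the Markov property at time $T/2$ to the null event $\{\|\boldsymbol{Y}_{3T/4}\|=0\}$ forces $\mathbf{P}_{\boldsymbol{e}_i}(\mathrm{supp}(\boldsymbol{Y}_{T/2})\cap S_2\neq\emptyset)=1$; by the branching property the same holds from any initial state whose support meets $S_2$, and iterating over steps of length $T/2$ yields $\mathbf{P}_{\boldsymbol{e}_i}(\|\boldsymbol{Y}_{kT/2}\|>0)=1$ for every $k\geq 1$, contradicting $T_i<\infty$.
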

For simplicity of exposition, the proof of this result is presented in Section \ref{proofs}.

As we said before, our aim is to describe the backbone decomposition of   $\boldsymbol{X}$. According to Berestycki et al. \cite{BKM} a one-type  supercritical superprocess  can be decomposed into an initial burst of subcritical mass and three types of immigration processes  along the backbone, which are two types of Poissonian immigrations  and branch point immigrations. 
In order to use the same idea in the multitype case, we need to determine the components of this decomposition. These  are the multitype branching diffusion process,  that gives the prolific genealogies, and copies of the original multitype superprocess conditioned on extinction.

\subsection{The multitype supercritical superdiffusion conditioned on extinction.}

It is well known that under some conditions a supercritical CB-process can be conditioned to become extinct  by conditioning the associated spectrally positive L\'evy process to drift to $-\infty$. 
Such a conditioning appears as an Esscher transform on the underlying L\'evy process in the Lamperti transform, where the shift parameter is given by the largest root of the branching mechanism. 
Here we show that a similar result still holds in the multitype case. 
In particular we have the following result.

\begin{proposition}\label{proposition2}
	For each $\boldsymbol{\mu}\in\mathcal{M}(\mathbb{R}^d)^\ell$, define the law of $\boldsymbol{X}$ with initial configuration $\boldsymbol{\mu}$ conditioned on becoming extinct by $\mathbb{P}^\dag_{\boldsymbol{\mu}}$, and let $\mathcal{F}_t:=\sigma(X_s,s\leq t)$. Specifically, for all events $A$, measurable with respect to $\mathcal{F}$,
	\[
	\mathbb{P}^\dag_{\boldsymbol{\mu}}(A)=\mathbb{P}_{\boldsymbol{\mu}}\left(A\left|\mathcal{E}\right.\right).
	\]
	Then, for all $\boldsymbol{f}\in\mathcal{B}^+(\mathbb{R}^d)^\ell$ 
	\[
	\mathbb{E}^\dag_{\boldsymbol{\mu}}\left[\mathrm{e}^{-\langle \boldsymbol{f},\boldsymbol{X}_t\rangle}\right]=\exp\Big\{-\langle \boldsymbol{V}^\dag_t\boldsymbol{f},\boldsymbol{\mu}\rangle\Big\},
	\]
	where
	\[
{V}^{\dag,(i)}_t \boldsymbol{f}(x):={V}^{(i)}_t (\boldsymbol{f}+\boldsymbol{w})(x)-w_i, \qquad i\in S,
	\]
	is the unique locally bounded solution of
	\begin{align}\label{semi-cond}
	{V}^{\dag,(i)}_t \boldsymbol{f}(x)=\mathtt{P}^{(i)}_t f_i(x)-\int_0^t\ud s\int_{\R^d}\psi^\dag(i,\boldsymbol{V}^\dag_{t-s}\boldsymbol{f}(y))\mathtt{P}^{(i)}_s(x,\ud y), \qquad i\in S,
	\end{align}
	where $\boldsymbol{\psi}^\dag(\boldsymbol{\lambda}):=\boldsymbol{\psi}(\boldsymbol{\lambda}+\boldsymbol{w})$ and $\boldsymbol{w}$ is given by \eqref{w}. 
In other words, $(\boldsymbol{X},\mathbb{P}_{\boldsymbol{\mu}}^\dag)$ is a $(\boldsymbol{\mathtt{P}},\boldsymbol{\psi}^\dag)$-multitype superprocess.
\end{proposition}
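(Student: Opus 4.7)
\textbf{Proof sketch for Proposition \ref{proposition2}.}

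The plan is a standard Doob $h$-transform computation adapted to the multitype setting: compute the Laplace functional of $(\boldsymbol{X},\mathbb{P}^\dag_{\boldsymbol{\mu}})$ explicitly and match it with \eqref{laplace} for the shifted branching mechanism $\boldsymbol{\psi}^\dag$. The two ingredients I need are (i) a closed form for $\mathbb{P}_{\boldsymbol{\mu}}(\mathcal{E}\mid \mathcal{F}_t)$ in terms of $\boldsymbol{X}_t$ and (ii) the identity $\boldsymbol{\psi}(\boldsymbol{w})=\boldsymbol{0}$ provided by Proposition \ref{finite extinction}.

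First I would establish the key extinction formula
\[
\mathbb{P}_{\boldsymbol{\nu}}(\mathcal{E})=\exp\big(-\langle \boldsymbol{w},\boldsymbol{\nu}\rangle\big), \qquad \boldsymbol{\nu}\in\mathcal{M}(\R^d)^\ell.
\]
For single atoms this is the definition \eqref{w}, and $w_i$ does not depend on the spatial point because the branching mechanism is spatially independent (so the total-mass vector is a MCB-process whose extinction probability is a function of the initial mass vector only). The extension to general $\boldsymbol{\nu}$ follows from the branching property of $\boldsymbol{X}$ together with infinite divisibility in $\boldsymbol{\mu}$, which is inherited from \eqref{laplace}. Applying the Markov property at time $t$ then gives $\mathbb{P}_{\boldsymbol{\mu}}(\mathcal{E}\mid \mathcal{F}_t)=\mathbb{P}_{\boldsymbol{X}_t}(\mathcal{E})=\exp(-\langle\boldsymbol{w},\boldsymbol{X}_t\rangle)$, since the event $\mathcal{E}$ is unchanged if we restrict attention to times $\geq t$.

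Using this, the definition of $\mathbb{P}^\dag_{\boldsymbol{\mu}}$ gives
\[
\mathbb{E}^\dag_{\boldsymbol{\mu}}\!\left[\mathrm{e}^{-\langle\boldsymbol{f},\boldsymbol{X}_t\rangle}\right]
=\frac{\mathbb{E}_{\boldsymbol{\mu}}\!\left[\mathrm{e}^{-\langle\boldsymbol{f}+\boldsymbol{w},\boldsymbol{X}_t\rangle}\right]}{\mathbb{P}_{\boldsymbol{\mu}}(\mathcal{E})}
=\exp\Big\{-\big\langle\boldsymbol{V}_t(\boldsymbol{f}+\boldsymbol{w})-\boldsymbol{w},\boldsymbol{\mu}\big\rangle\Big\},
\]
so that setting $V^{\dag,(i)}_t\boldsymbol{f}(x):=V^{(i)}_t(\boldsymbol{f}+\boldsymbol{w})(x)-w_i$ delivers the claimed Laplace functional. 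To obtain \eqref{semi-cond}, I substitute $\boldsymbol{f}\mapsto\boldsymbol{f}+\boldsymbol{w}$ into \eqref{inteq_u}, use conservativeness of $\xi^{(i)}$ (which gives $\mathtt{P}^{(i)}_t w_i=w_i$ since $w_i$ is constant in space) to subtract $w_i$ from both sides, and finally rewrite $\psi(i,\boldsymbol{V}_{t-s}(\boldsymbol{f}+\boldsymbol{w}))=\psi(i,\boldsymbol{V}^\dag_{t-s}\boldsymbol{f}+\boldsymbol{w})=\psi^\dag(i,\boldsymbol{V}^\dag_{t-s}\boldsymbol{f})$ by definition of $\boldsymbol{\psi}^\dag$. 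Uniqueness of the locally bounded solution is inherited from the uniqueness statement in Proposition \ref{Proposition1}.

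The main obstacle is the last step: identifying $(\boldsymbol{X},\mathbb{P}^\dag_{\boldsymbol{\mu}})$ as a genuine $(\boldsymbol{\mathtt{P}},\boldsymbol{\psi}^\dag)$-multitype superprocess. For this I must verify that $\boldsymbol{\psi}^\dag(\boldsymbol{\lambda})=\boldsymbol{\psi}(\boldsymbol{\lambda}+\boldsymbol{w})$ is itself a branching mechanism of the form \eqref{branching mechanism}. The non-trivial point is that $\boldsymbol{\psi}^\dag(i,\boldsymbol{0})=\psi(i,\boldsymbol{w})=0$, which is exactly the content of Proposition \ref{finite extinction}; the quadratic coefficient $\beta_i$ is unchanged, the drift matrix is modified through Esscher-type shifts $\boldsymbol{B}\mapsto\boldsymbol{B}^\dag$ coming from the linear term in the exponential tilt, and the L\'evy measure is tilted to $\Pi^\dag(i,\mathrm{d}\boldsymbol{y})=\mathrm{e}^{-[\boldsymbol{w},\boldsymbol{y}]}\Pi(i,\mathrm{d}\boldsymbol{y})$, whose integrability and off-diagonal sign conditions are inherited from those of $\Pi$. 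Once $\boldsymbol{\psi}^\dag$ is placed in the form \eqref{branching mechanism}, Proposition \ref{Proposition1} produces a $(\boldsymbol{\mathtt{P}},\boldsymbol{\psi}^\dag)$-superprocess whose Laplace functional solves \eqref{semi-cond}, and uniqueness of this solution together with the computation above completes the identification.
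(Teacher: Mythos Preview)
Your proof is correct and follows essentially the same route as the paper: compute the conditioned Laplace functional via the Markov property and the identity $\mathbb{P}_{\boldsymbol{\nu}}(\mathcal{E})=\mathrm{e}^{-\langle\boldsymbol{w},\boldsymbol{\nu}\rangle}$, read off $\boldsymbol{V}^\dag_t\boldsymbol{f}=\boldsymbol{V}_t(\boldsymbol{f}+\boldsymbol{w})-\boldsymbol{w}$, substitute into \eqref{inteq_u} to obtain \eqref{semi-cond}, and then verify that $\boldsymbol{\psi}^\dag$ has the form \eqref{branching mechanism} (with tilted L\'evy measure $\mathrm{e}^{-[\boldsymbol{w},\boldsymbol{y}]}\Pi(i,\mathrm{d}\boldsymbol{y})$ and shifted drift matrix $\boldsymbol{B}^\dag$) so that Proposition~\ref{Proposition1} supplies uniqueness. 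The paper carries out exactly these steps, giving the explicit formulas for $\boldsymbol{B}^\dag$ and $\psi^\dag$ that you describe in words.
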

For simplicity of exposition, the proof of this result is presented in Section \ref{proofs}.

\subsection{Dynkin-Kuznetsov measure.}
As we mentioned before, a key ingredient in the construction of the backbone, or even the spine  decomposition for superprocesses, is the so-called  Dynkin-Kuznetsov measure. It is important to note that the existence of such  measures was taken for granted in most of  the references that appear in the literature, in particular in \cite{BKM, EKW, KPR, MP}. Fortunately, from the assumptions and the way the dressing processes are constructed this omission does not play an important role on the validity of their results. 
 Here, we  provide  a rigorous argument  for their existence. 
 See also Chen et al.  \cite{CSY} for the study of Dynkin-Kutznetsov measures for one-type superprocesses with non-local branching mechanism.
  
 Let us denote by  $\mathcal{X}$   the space  of c\`adl\`ag paths  from $[0,\infty)$ to $\mathcal{M}(\mathbb{R}^d)^\ell$. 

 \begin{proposition}\label{Dyn-Kuz} Let $\boldsymbol{X}$ be a $(\boldsymbol{\mathtt{P}},\boldsymbol{\psi})$-multitype superprocess satisfying \eqref{extinction assumption}. For $x\in \mathbb{R}^d$, there exists a measure $\mathbb{N}_{x\boldsymbol{e}_i}$  on the space $\mathcal{X}$  satisfying
\begin{equation}\label{NmeasureLaplace}
\mathbb{N}_{x\boldsymbol{e}_i}\left(1-\mathrm{e}^{-\langle \boldsymbol{f},\boldsymbol{X}_t\rangle}\right)=-\log\mathbb{E}_{\delta_x\boldsymbol{e}_{i}}\left[\mathrm{e}^{-\langle \boldsymbol{f},\boldsymbol{X}_t\rangle}\right],
\end{equation}
for all $\boldsymbol{f}\in \mathcal{B}(\R^d)^\ell$ and $t\geq0$.
 \end{proposition}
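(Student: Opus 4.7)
The plan is to construct $\mathbb{N}_{x\boldsymbol{e}_i}$ as the canonical Lévy measure arising from the branching-property-induced infinite divisibility of $\mathbb{P}_{\delta_x\boldsymbol{e}_i}$, following the cluster-decomposition philosophy developed for single-type superprocesses in Chapter 8 of Li \cite{Z} and extended to the non-local one-type case in \cite{CSY}. The branching property is encoded in \eqref{laplace}: since $\exp(-cV^{(i)}_t\boldsymbol{f}(x))$ is the Laplace functional of $\boldsymbol{X}_t$ under $\mathbb{P}_{c\delta_x\boldsymbol{e}_i}$ for every $c>0$, iterating over the Markov property yields, for every $n\in\N$ and every bounded measurable functional $\Phi$ of finitely many time coordinates,
\[
\mathbb{E}_{\delta_x\boldsymbol{e}_i}\!\left[\mathrm{e}^{-\Phi(\boldsymbol{X})}\right]=\left(\mathbb{E}_{\delta_x\boldsymbol{e}_i/n}\!\left[\mathrm{e}^{-\Phi(\boldsymbol{X})}\right]\right)^{n}.
\]
Hence the finite-dimensional marginals of $\boldsymbol{X}$ under $\mathbb{P}_{\delta_x\boldsymbol{e}_i}$ are infinitely divisible as random elements of the cone $(\mathcal{M}(\R^d)^\ell)^k$.

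For each $0<t_1<\cdots<t_k$, I would apply the Lévy-Khintchine representation for infinitely divisible random measures (cf.\ Theorem~1.37 in \cite{Z}) to obtain a unique $\sigma$-finite measure $\mathbb{N}^{t_1,\ldots,t_k}_{x\boldsymbol{e}_i}$ on $(\mathcal{M}(\R^d)^\ell)^{k}\setminus\{\boldsymbol{0}\}$ satisfying
\[
-\log\mathbb{E}_{\delta_x\boldsymbol{e}_i}\!\left[\mathrm{e}^{-\sum_{j=1}^{k}\langle\boldsymbol{f}_j,\boldsymbol{X}_{t_j}\rangle}\right]=\int\!\Bigl(1-\mathrm{e}^{-\sum_{j=1}^{k}\langle\boldsymbol{f}_j,\boldsymbol{\nu}_j\rangle}\Bigr)\,\mathbb{N}^{t_1,\ldots,t_k}_{x\boldsymbol{e}_i}(\ud\boldsymbol{\nu}_1,\ldots,\ud\boldsymbol{\nu}_k).
\]
The essential use of assumption \eqref{extinction assumption} enters here: letting each $\boldsymbol{f}_j\hookrightarrow\infty$ bounds the left-hand side above by $w_i<\infty$, so $\mathbb{N}^{t_1,\ldots,t_k}_{x\boldsymbol{e}_i}(\{\max_j\|\boldsymbol{\nu}_j\|>\epsilon\})<\infty$ for every $\epsilon>0$, ensuring $\sigma$-finiteness. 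Consistency of the family $\{\mathbb{N}^{t_1,\ldots,t_k}_{x\boldsymbol{e}_i}\}_{k,(t_j)}$ under coordinate projection follows from the uniqueness in the Lévy-Khintchine representation combined with the Markov property, and a Kolmogorov-type extension (suitably adapted to $\sigma$-finite measures) then produces a measure $\tilde{\mathbb{N}}_{x\boldsymbol{e}_i}$ on the product space $(\mathcal{M}(\R^d)^\ell)^{[0,\infty)}$ whose finite-dimensional projections agree with the $\mathbb{N}^{t_1,\ldots,t_k}_{x\boldsymbol{e}_i}$. Specialising to $k=1$ yields \eqref{NmeasureLaplace}, modulo the regularity step below.

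The main obstacle is the last step, namely showing that $\tilde{\mathbb{N}}_{x\boldsymbol{e}_i}$ is actually carried by the c\`adl\`ag path space $\mathcal{X}$, since Kolmogorov's theorem only produces a measure on the set of arbitrary functions $[0,\infty)\to\mathcal{M}(\R^d)^\ell$. I would handle this via a Poissonian cluster argument: letting $\mathcal{N}$ be a Poisson random measure on that product space with intensity $\tilde{\mathbb{N}}_{x\boldsymbol{e}_i}$, the superposition $\boldsymbol{X}^{\star}_t:=\int\boldsymbol{\omega}_t\,\mathcal{N}(\ud\boldsymbol{\omega})$ is almost surely finite by the bound above and has the same finite-dimensional distributions as $\boldsymbol{X}$ under $\mathbb{P}_{\delta_x\boldsymbol{e}_i}$. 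Since the latter admits a c\`adl\`ag modification by Proposition~\ref{Proposition1}, arguments paralleling Lemma~8.5 and Theorem~8.23 of \cite{Z} (or the non-local one-type reasoning of \cite{CSY}) transfer path regularity to $\tilde{\mathbb{N}}_{x\boldsymbol{e}_i}$-almost every excursion, delivering the desired excursion measure $\mathbb{N}_{x\boldsymbol{e}_i}$ on $\mathcal{X}$ satisfying \eqref{NmeasureLaplace}.
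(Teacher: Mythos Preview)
Your strategy is broadly viable but differs from the paper's route and contains one concrete gap.

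The paper does not assemble finite-dimensional L\'evy measures and then extend. Instead it passes to the single-type reformulation $\mathcal{Z}$ on $\mathcal{M}(\mathbb{R}^d\times S)$ from the proof of Proposition~\ref{Proposition1}, invokes the canonical representation $\mathtt{V}_t f(x,i)=\int f\,\ud\Lambda_t(x,i,\cdot)+\int(1-\mathrm{e}^{-\langle f,\nu\rangle})L_t(x,i,\ud\nu)$ of Theorem~1.36 in \cite{Z}, shows that $\Lambda_t\equiv 0$ so that $(L_t)_{t>0}$ is an entrance law for the restricted semigroup (Proposition~2.8 of \cite{Z}), and then applies Theorem~A.40 of \cite{Z} to produce $\widetilde{\mathbb{N}}_{(x,i)}$ \emph{directly on the c\`adl\`ag path space}. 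The measure $\mathbb{N}_{x\boldsymbol{e}_i}$ is then the pushforward under the homeomorphism $\mathcal{M}(\mathbb{R}^d\times S)\cong\mathcal{M}(\mathbb{R}^d)^\ell$. This is shorter than your plan because both the Kolmogorov-type extension and the regularity transfer are absorbed into the entrance-law machinery.

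The gap is the sentence ``letting each $\boldsymbol{f}_j\hookrightarrow\infty$ bounds the left-hand side above by $w_i<\infty$''. The inequality goes the wrong way: since $\{\|\boldsymbol{X}_t\|=0\}\subset\mathcal{E}$ one has $-\log\mathbb{P}_{\delta_x\boldsymbol{e}_i}(\|\boldsymbol{X}_t\|=0)\geq w_i$, not $\leq w_i$. What you actually need is $\mathbb{P}_{\delta_x\boldsymbol{e}_i}(\|\boldsymbol{X}_t\|=0)>0$ for every fixed $t>0$, and this is not assumption~\eqref{extinction assumption} but the non-trivial content of Proposition~\ref{finite extinction}. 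The paper uses precisely this step: it sends $\lambda\to\infty$ in the one-time representation and invokes Proposition~\ref{finite extinction} to force $\Lambda_t(x,i,\mathbb{R}^d\times S)=0$. Relatedly, your displayed L\'evy--Khintchine identity omits the deterministic linear part without justification; its vanishing is exactly what finiteness of $-\log\mathbb{P}_{\delta_x\boldsymbol{e}_i}(\|\boldsymbol{X}_t\|=0)$ buys, so you must argue it rather than assume it. With Proposition~\ref{finite extinction} inserted at that point, the remainder of your outline can be carried through.
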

 Again, for simplicity of exposition  we provide  the proof of this Proposition  in Section \ref{proofs}.

Following the same terminology as  in the literature, we call  $\{(\mathbb{N}_{x\boldsymbol{e}_i}, x\in \R^d), i\in S\}$ the Dynkin-Kuznetsov measures. We denote by $\mathbb{N}^\dag$  the Dynkin-Kuznetsov measures associated to the multitype superprocess conditioned on extinction,  which are also well defined (see the discussion after the proof of Proposition \ref{Dyn-Kuz}).

\subsection{Prolific individuals.}

Here,  we consider those individuals of the superprocess who are responsible for the infinite growth of the process. In our  case,  we have that the so-called  prolific individuals, i.e. those with an infinite genealogical line of descent,  form a branching particle diffusion where the particles move according to the same motion semigroup as the superprocess itself,  and their branching generator can be expressed in terms of the branching mechanism of the superprocess.
Let $\boldsymbol{Z}=(\boldsymbol{Z}_t, t\geq 0)$  be a multitype branching diffusion process (MBDP) with $\ell$ types, where the movement of each particle of type $i\in S$ is given by the semigroup $\mathtt{P}^{(i)}$.
The branching rate $\boldsymbol{q}\in\mathbb{R}^\ell_+$ takes the form  
\begin{equation}\label{brate}
	q_i=\frac{\partial}{\partial x_i}\psi(i, \boldsymbol{w}), \qquad\text{$i\in S$},
\end{equation}
where $\boldsymbol{w}$ was defined in  \eqref{w}.

The offspring distribution $(p^{(i)}_{j_1,\dots, j_\ell})_{(j_1,\dots, j_\ell)\in\mathbb{N}^\ell}$ satisfies
\begin{equation}\label{p^i definition}
	\begin{split}
		p^{(i)}_{j_1,\dots,j_\ell}&=\frac{1}{w_iq_i}\left( \beta_i w_i^2\boldsymbol{1}_{\{\boldsymbol{j}=2\boldsymbol{e}_i\}}+\left( B_{ki} w_k+\int_{\mathbb{R}_+^\ell}w_k y_k \mathrm{e}^{-[\boldsymbol{w},\boldsymbol{y}]}\Pi(i,\mathrm{d}\boldsymbol{y})\right)\boldsymbol{1}_{\{\boldsymbol{j}=\boldsymbol{e}_k\}}\boldsymbol{1}_{\{i\neq k\}}\right. \\
		&\hspace{4cm}\left.+\int_{\mathbb{R}_+^\ell}\frac{(w_1 y_1)^{j_1}\dots(w_\ell y_\ell)^{j_\ell}}{j_1!\dots j_\ell !}\mathrm{e}^{-[ \boldsymbol{w},\boldsymbol{y}]}\Pi(i,\mathrm{d}\boldsymbol{y}) \boldsymbol{1}_{\{j_1+\dots+j_\ell\geq 2\}}\right),
	\end{split}
\end{equation}
where $\boldsymbol{j}=(j_1,\cdots, j_\ell)$. Note that $p^{(i)}_{j_1,\dots,j_\ell}$ is a probability distribution.  Indeed,  since  $\boldsymbol{ \psi}(\boldsymbol{w})=\boldsymbol{ 0}$, for each $i\in S$ we get that  
\begin{equation*}
	\begin{split}
		w_iq_i&=w_iq_i-\psi(i,\boldsymbol{w})\\
		&=w_i\left(-B_{ii}+2\beta_i w_i+\int_{\mathbb{R}_+^\ell}\left( 1-\mathrm{e}^{-[ \boldsymbol{w},\boldsymbol{y}]}\right)y_i\Pi(i,\mathrm{d}\boldsymbol{y})\right)\\
		&\hspace{6cm}+[ \boldsymbol{w},\boldsymbol{B}\boldsymbol{e}_i] -\beta_i w_i^2-\int_{\mathbb{R}_+^\ell}\left( \mathrm{e}^{-[ \boldsymbol{w},\boldsymbol{y}]}-1+w_i y_i\right)\Pi(i,\mathrm{d}\boldsymbol{y})\\
		&=\sum_{j\neq i}B_{ji} w_j+\beta_i w_i^2+\int_{\mathbb{R}_+^\ell}\mathrm{e}^{-[ \boldsymbol{w},\boldsymbol{y}]}\left( \mathrm{e}^{[ \boldsymbol{w},\boldsymbol{y} ]}-1-w_iy_i \right)\Pi(i,\mathrm{d}\boldsymbol{y})\\
		&=\sum_{j\neq i}\left(B_{ji} w_j+\int_{\mathbb{R}_+^\ell}w_j y_j \mathrm{e}^{-[ \boldsymbol{w},\boldsymbol{y}]}\Pi(i,\mathrm{d}\boldsymbol{y})\right)+\beta_i w_i^2+\int_{\mathbb{R}_+^\ell}\mathrm{e}^{-[ \boldsymbol{w},\boldsymbol{y}]}\left( \mathrm{e}^{[ \boldsymbol{w},\boldsymbol{y}]}-1-[ \boldsymbol{w},\boldsymbol{y}]\right)\Pi(i,\mathrm{d}\boldsymbol{y})\\
		&=\sum_{j\neq i}\left(B_{ji} w_j+\int_{\mathbb{R}_+^\ell}w_j y_j \mathrm{e}^{-[ \boldsymbol{w},\boldsymbol{y}]}\Pi(i,\mathrm{d}\boldsymbol{y})\right)+\beta_i w_i^2\\
		&\hspace{7cm}+\int_{\mathbb{R}_+^\ell}\sum_{j_1+\dots+j_\ell\geq 2}\frac{(w_1 y_1)^{j_1}\dots(w_\ell y_\ell)^{j_\ell}}{j_1! \dots j_\ell !}\mathrm{e}^{-[ \boldsymbol{w},\boldsymbol{y}]}\Pi(i,\mathrm{d}\boldsymbol{y}),
	\end{split}
\end{equation*}
where in the last row we have used the multinomial theorem, i.e.
\begin{equation}\label{eq:binom}
\begin{split}
\sum_{n=2}^{\infty}\frac{[\boldsymbol{x},\boldsymbol{y}]^n}{n!}&=\sum_{n=2}^\infty \frac{1}{n!} \sum_{j_1+\dots+j_\ell=n}\binom{n}{j_1,\dots,j_\ell}\prod_{k=1}^\ell(x_k y_k)^{j_k}=\sum_{j_1+\dots+j_\ell\geq 2}\frac{(x_1 y_1)^{j_1}\dots(x_\ell y_\ell)^{j_\ell}}{j_1! \dots j_\ell!}.
\end{split}
\end{equation}
Let $\boldsymbol{F}(\boldsymbol{s})=(F_1(\boldsymbol{s}),\dots,F_\ell(\boldsymbol{s}))^{\tr}$, $\boldsymbol{s}\in [0,1]^\ell$, be the branching mechanism of   $\boldsymbol{Z}$,  which is given by
\begin{equation}\label{Fi}
F_i(\boldsymbol{s})=q_i\sum_{\boldsymbol{j}\in\mathbb{N}^\ell}(s_1^{j_1}\dots s_\ell^{j_\ell}-s_i)p_{j_1,\dots,j_\ell}^{(i)}=\frac{1}{w_i}\psi(i,\boldsymbol{w}\cdot(\boldsymbol{1}-\boldsymbol{s})),\qquad i\in S,
\end{equation}
where we recall that  $\boldsymbol{1}$ denotes the vector with value 1 in each coordinate and $\boldsymbol{u}\cdot \boldsymbol{v}$ is the element-wise  multiplication of  the vectors $\boldsymbol{u}$ and $\boldsymbol{v}$.
The intuition behind the process $\boldsymbol{Z}$ is as follows. 
A particle of type $i$ from  its birth executes a $\mathtt{P}^{(i)}$ motion, and  after an independent and exponentially distributed random time with parameter $q_i$ dies and gives birth at its death position  to an independent number of offspring with distribution $\{ p_{\boldsymbol{j}}^{(i)},\boldsymbol{j}\in\mathbb{N}^\ell \}$. 
We call $\boldsymbol{Z}$ the {\it backbone} of the multitype superprocess $\boldsymbol{X}$,  and denote its initial distribution by  $\boldsymbol{\nu}\in\mathcal{M}_a(\mathbb{R}^d)^\ell$, where  $\mathcal{M}_a(\mathbb{R}^d)$ denotes the space of  atomic measures on $\mathbb{R}^d$.
Comparing the form of the offspring distribution between the one-type case and the  multitype case, the main difference is that now we are allowed to have one offspring at a branching event. However  in this case,  that offspring  has to have a  different type from its parent.

\subsection{The backbone decomposition.}
Our primary aim is to  give a decomposition of the $(\boldsymbol{\mathtt{P}},\boldsymbol{\psi})$-multitype superprocess along its embedded backbone $\boldsymbol{Z}$.
The main idea is to dress  the process $\boldsymbol{Z}$ with immigration, where the processes  we immigrate are copies of the $(\boldsymbol{\mathtt{P}},\boldsymbol{\psi}^\dag)$-multitype superprocess. 
The dressing relies on three different types of immigration mechanisms.
These  are two types of Poissonian immigrations along the life span of each prolific individual, and  an additional creation of mass at the branch points of the  embedded particle system. In the first case, we immigrate independent copies of the $(\boldsymbol{\mathtt{P}},\boldsymbol{\psi}^\dag)$-multitype superprocess,  where the immigration rate along a particle of  type $i\in S$ is related to a subordinator in $\mathbb{R}^\ell_+$, whose Laplace exponent is given by
\[
\phi(i,\boldsymbol{\lambda})=\frac{\partial}{\partial x_i}\psi^\dag (i,\boldsymbol{\lambda})-\frac{\partial}{\partial x_i}\psi^\dag (i,\boldsymbol{0})=\frac{\partial}{\partial x_i}\psi(i,\boldsymbol{\lambda}+\boldsymbol{w})-\frac{\partial}{\partial x_i}\psi(i,\boldsymbol{w}),
\]
which can be rewritten as
\begin{equation}\label{phi}
\phi(i,\boldsymbol{\lambda})=2\beta_i\lambda_i+\int_{\mathbb{R}_+^\ell}\left(1-\mathrm{e}^{-[\boldsymbol{\lambda},\boldsymbol{y}]} \right)y_i\mathrm{e}^{-[\boldsymbol{w},\boldsymbol{y}]}\Pi(i,\mathrm{d}\boldsymbol{y}).
\end{equation}
When an individual of type $i\in S$ has branched and its offspring is given by $\boldsymbol{j}=(j_1,\dots,j_\ell)\in\mathbb{N}^\ell$,  we immigrate an independent copy of the  $(\boldsymbol{\mathtt{P}},\boldsymbol{\psi}^\dag)$-multitype superprocess  where the initial mass has distribution 
\begin{equation}\label{eq:eta}
\begin{split}
\eta^{(i)}_{\boldsymbol{j}}(\mathrm{d}\boldsymbol{y})=\frac{1}{w_iq_i p^{(i)}_{\boldsymbol{j}}}&\bigg( \beta_i w_i^2\boldsymbol{1}_{\{\boldsymbol{j}=2\boldsymbol{e}_i\}}\delta_{\boldsymbol{0}}(\mathrm{d}\boldsymbol{y})+\left( B_{ki} w_k\delta_{\boldsymbol{0}}(\mathrm{d}\boldsymbol{y})+w_k y_k \mathrm{e}^{-[\boldsymbol{w},\boldsymbol{y}]}\Pi(i,\mathrm{d}\boldsymbol{y})\right)\boldsymbol{1}_{\{\boldsymbol{j}=\boldsymbol{e}_k\}}\boldsymbol{1}_{\{i\neq k\}} \\
		&\hspace{4cm}+\frac{(w_1 y_1)^{j_1}\dots(w_\ell y_\ell)^{j_\ell}}{j_1!\dots j_\ell !}\mathrm{e}^{-[ \boldsymbol{w},\boldsymbol{y}]}\Pi(i,\mathrm{d}\boldsymbol{y}) \boldsymbol{1}_{\{j_1+\dots+j_\ell\geq 2\}}\bigg).
\end{split}
\end{equation}

Before we state our main results, we recall and introduce some notation.
Recall that $\mathcal{X}$ denotes the space of c\`adl\`ag paths.  Similarly to  the one-type case, we  use an  Ulam-Harris labelling to reference the particles, and we denote the obtained tree by $\mathcal{T}$.
For a particle $u\in\mathcal{T}$ let $\gamma_u$ denote the type of the particle, $\tau_u$ its  birth time, $\sigma_u$ its  death time,  and $z_u(t)$ its  spatial position at time $t$ (whenever  $\tau_u\leq t<\sigma_u$).

\begin{defi}
For $\boldsymbol{\nu}\in \mathcal{M}_a(\mathbb{R}^d)^\ell$, let $\boldsymbol{Z}$ be a MBDP with initial configuration $\boldsymbol{\nu}$,  and let  $\widetilde{\boldsymbol{X}}$ be an independent copy of $\boldsymbol{X}$ under $\mathbb{P}_{\boldsymbol{\mu}}^\dag$.
We define the stochastic process $\boldsymbol{\Lambda}=(\boldsymbol{\Lambda}_t,t\geq0)$  on $\mathcal{M}(\mathbb{R}^d)^\ell$ by
\[
\boldsymbol{\Lambda}=\widetilde{\boldsymbol{X}}+\boldsymbol{I}^{\mathbb{N}^\dag}+\boldsymbol{I}^{\mathbb{P}^\dag}+\boldsymbol{I}^{\eta},
\]
where the processes $\boldsymbol{I}^{\mathbb{N}^\dag}=(\boldsymbol{I}^{\mathbb{N}^\dag}_t,t\geq0)$, $\boldsymbol{I}^{\mathbb{P}^\dag}=(\boldsymbol{I}^{\mathbb{P}^\dag}_t,t\geq0)$, and $\boldsymbol{I}^{\eta}=(\boldsymbol{I}^{\eta}_t,t\geq0)$  are independent of $\widetilde{\boldsymbol{X}}$ and, conditionally on $\boldsymbol{Z}$, are  mutually independent. Moreover, these three processes are described pathwise as follows.
\begin{itemize}

	\item[i)]\textbf{Continuous immigration.} The process $\boldsymbol{I}^{\mathbb{N}^\dag}$ is  $\mathcal{M}(\mathbb{R}^d)^\ell$-valued such that
	\begin{equation}
	\boldsymbol{I}_t^{\mathbb{N}^\dag}=\sum_{u\in\mathcal{T}}\sum_{t\wedge\tau_u\leq r< t\wedge\sigma_u}\boldsymbol{X}_{t-r}^{(1,u,r)},\notag
	\end{equation}
	where, given $\boldsymbol{Z}$, independently for each $u\in\mathcal{T}$ such that $\tau_u<t$, the processes $\boldsymbol{X}^{(1,u,r)}$ are countable in number and correspond to $\mathcal{X}$-valued
	Poissonian immigration along the space-time trajectory $\{(z_u(r),r),r\in[\tau_u,t\wedge\sigma_u)\}$ with rate $2\beta_{\gamma_u} \mathrm{d}r\times \mathrm{d}\mathbb{N}^\dag_{z_u(r)\boldsymbol{e}_{\gamma_u}}$.
	
	\item[ii)]\textbf{Discontinuous immigration.} The process $\boldsymbol{I}^{\mathbb{P}^\dag}$ is $\mathcal{M}(\mathbb{R}^d)^\ell$-valued  such that
	\begin{equation}
	\boldsymbol{I}_t^{\mathbb{P}^\dag}=\sum_{u\in\mathcal{T}}\sum_{t\wedge \tau_u\leq r< t\wedge\sigma_u}\boldsymbol{X}_{t-r}^{(2,u,r)}\notag
	\end{equation}
	where, given $\boldsymbol{Z}$, independently for each $u\in\mathcal{T}$ such that $\tau_u\leq t$, the processes $\boldsymbol{X}_{\cdot}^{(2,u,r)}$ are countable in number and 
	correspond to $\mathcal{X}$-valued, Poissonian immigration along the space-time trajectory $\{(z_u(r),r),r\in[\tau_u,t\wedge\sigma_u)\}$ with rate 
	\[\mathrm{d}r\times\int_{\boldsymbol{y}\in\mathbb{R}_+^\ell}y_{\gamma_u}\mathrm{e}^{-[ \boldsymbol{w},\boldsymbol{y}] }\Pi({\gamma_u}, \mathrm{d}\boldsymbol{y})\times \mathrm{d}\mathbb{P}^\dag_{\boldsymbol{y}\delta_{z_u(r)}}.\]

	\item[iii)]\textbf{Branch point based immigration.} The process $\boldsymbol{I}^{\eta}$ is $\mathcal{M}(\mathbb{R}^d)^\ell$-valued  such that
	\begin{equation}
	\boldsymbol{I}_t^{\eta}=\sum_{u\in\mathcal{T}}\boldsymbol{1}_{\{\sigma_u\leq t\}}\boldsymbol{X}_{t-\sigma_u}^{(3,u)}\notag
	\end{equation}
	where, given $\boldsymbol{Z}$, independently for each $u\in\mathcal{T}$ such that $\sigma_u\leq t$, the process $\boldsymbol{X}_{\cdot}^{(3,u)}$ is an independent copy of $\boldsymbol{X}$ issued at time $\sigma_u$ with law $\mathbb{P}_{\boldsymbol{Y}_u\delta_{z_u(\sigma_u)}}$ where $\boldsymbol{Y}_u$  is an independent random variable with distribution $\eta^{(\gamma_u)}_{\mathcal{N}_1^u,\dots,\mathcal{N}_\ell^u}(\mathrm{d}\boldsymbol{y})$.
	Here $(\mathcal{N}_1^u,\dots,\mathcal{N}_\ell^u)$ is the offspring of $u$,  i.e. $\mathcal{N}_i^u$ is the number of offspring of type $i$.
	
\end{itemize}
Moreover, we denote the law of the pair $(\boldsymbol{\Lambda},\boldsymbol{Z})$ by $\widehat{\mathbb{P}}_{(\boldsymbol{\mu},\boldsymbol{\nu})}$.
\end{defi}
Since $\boldsymbol{Z}$ is a MBDP and, given $\boldsymbol{Z}$,   immigrating mass  occurs independently according to a Poisson point process  or at the splitting times of $\boldsymbol{Z}$, we can deduce that  the process $((\boldsymbol{\Lambda},\boldsymbol{Z}),\widehat{\mathbb{P}}_{(\boldsymbol{\mu},\boldsymbol{\nu})})$ is Markovian. It is important to note that the  mass  which has immigrated up to  a fixed time evolves in a Markovian way thanks to the branching property.

Now we are ready to state the main results of the paper. Our first result determines the law of the couple $(\boldsymbol{\Lambda},\boldsymbol{Z})$,  and in particular shows that $\boldsymbol{\Lambda}$ is conservative.

\begin{theorem}\label{thm1}
For $\boldsymbol{\mu}\in\mathcal{M}(\mathbb{R}^d)^\ell$, $\boldsymbol{\nu}\in\mathcal{M}_a(\mathbb{R}^d)^\ell$, $\boldsymbol{f},\boldsymbol{h}\in \mathcal{B}^+(\mathbb{R}^d)^\ell$, and  $t\geq 0$ we have 
\begin{equation}\label{laplceexpjoint}
\widehat{\mathbb{E}}_{(\boldsymbol{\mu},\boldsymbol{\nu})}\left[\mathrm{e}^{-\langle \boldsymbol{f},\boldsymbol{\Lambda}_t\rangle-\langle \boldsymbol{h},\boldsymbol{Z}_t\rangle}\right]=\exp\left\{-\langle \boldsymbol{V}_t^\dag\boldsymbol{f},\boldsymbol{\mu}\rangle-\langle \boldsymbol{U}^{(\boldsymbol{f})}_t\boldsymbol{h},\boldsymbol{\nu}\rangle \right\},
\end{equation}
	where   $\boldsymbol{V}^\dag$ is defined  in \eqref{semi-cond},  and $\exp\{-\boldsymbol{U}^{(\boldsymbol{f})}_t\boldsymbol{h}(x)\}=(\exp\{-{U}^{(\boldsymbol{f}, 1)}_t\boldsymbol{h}(x)\},\cdots,\exp\{-{U}^{(\boldsymbol{f}, \ell)}_t\boldsymbol{h}(x)\})^{\tr}: \R^d\rightarrow\R_+^\ell$ 
	is the unique $[0,1]^\ell$-valued solution to the system of integral equations
\begin{equation}\label{cthm1}
\mathrm{e}^{-{U}^{(\boldsymbol{f},i)}_t\boldsymbol{h}(x)}=\mathtt{P}^{(i)}_t\mathrm{e}^{-h_i(x)}+\frac{1}{w_i}\int_0^t \mathrm{d}s \int_{\R^d}   \left[\psi^\dag\left(i, -\boldsymbol{w}\cdot \mathrm{e}^{-\boldsymbol{U}^{(\boldsymbol{f})}_{t-s}\boldsymbol{h}(y)}
+\boldsymbol{V}^{\dag}_{t-s}\boldsymbol{f}(y)\right)-\psi^\dag(i,\boldsymbol{V}^{\dag}_{t-s}\boldsymbol{f}(y))\right] \mathtt{P}^{(i)}_s(x,\ud y)
\end{equation}
for $x\in \mathbb{R}^d$, and $t\geq 0$. In particular, for each $t\ge 0$, $\boldsymbol{\Lambda}_t$ has almost surely finite mass.
\end{theorem}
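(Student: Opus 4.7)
The plan is to establish \eqref{laplceexpjoint} by factoring the joint Laplace transform along the independence structure of the decomposition and then identifying the contribution of the dressed backbone via a first-split decomposition of $\boldsymbol{Z}$. Since $\widetilde{\boldsymbol{X}}$ is independent of $(\boldsymbol{I}^{\mathbb{N}^\dag},\boldsymbol{I}^{\mathbb{P}^\dag},\boldsymbol{I}^{\eta},\boldsymbol{Z})$, Proposition \ref{proposition2} peels off the factor $\exp\{-\langle\boldsymbol{V}^\dag_t\boldsymbol{f},\boldsymbol{\mu}\rangle\}$. The branching property of $\boldsymbol{Z}$ together with the conditional independence of the three immigrations given $\boldsymbol{Z}$ makes the remaining expectation multiplicative over the atoms of $\boldsymbol{\nu}$, so one may reduce to $\boldsymbol{\nu}=\delta_x\boldsymbol{e}_i$ and define $\mathrm{e}^{-U^{(\boldsymbol{f},i)}_t\boldsymbol{h}(x)}$ to be the corresponding single-atom Laplace transform.

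Next, I would condition on $\boldsymbol{Z}$ and apply the Campbell formula to the two Poissonian immigrations: the identities $\mathbb{N}^\dag_{y\boldsymbol{e}_k}(1-\mathrm{e}^{-\langle\boldsymbol{f},\boldsymbol{X}_u\rangle})=V^{\dag,k}_u\boldsymbol{f}(y)$ from Proposition \ref{Dyn-Kuz} applied to $\boldsymbol{\psi}^\dag$, together with $\mathbb{E}^\dag_{\boldsymbol{y}\delta_z}[\mathrm{e}^{-\langle\boldsymbol{f},\boldsymbol{X}_u\rangle}]=\exp\{-[\boldsymbol{V}^\dag_u\boldsymbol{f}(z),\boldsymbol{y}]\}$ from Proposition \ref{proposition2}, combine via the definition \eqref{phi} of $\phi$ into the single exponent $\int_{\tau_u}^{t\wedge\sigma_u}\phi(\gamma_u,\boldsymbol{V}^\dag_{t-r}\boldsymbol{f}(z_u(r)))\ud r$ along each branch. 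At the branch time of a particle $u$ with offspring $\boldsymbol{j}$, the combined contribution of the branch-point immigration and the newly created children is $\int\eta^{(\gamma_u)}_{\boldsymbol{j}}(\ud\boldsymbol{z})\exp\{-[\boldsymbol{V}^\dag_{t-\sigma_u}\boldsymbol{f}(z_u(\sigma_u)),\boldsymbol{z}]\}\prod_k\mathrm{e}^{-j_kU^{(\boldsymbol{f},k)}_{t-\sigma_u}\boldsymbol{h}(z_u(\sigma_u))}$. Splitting $\boldsymbol{Z}$ started from $\delta_x\boldsymbol{e}_i$ on its first branching time $T\sim\mathrm{Exp}(q_i)$, and then invoking a standard Duhamel/Dynkin rearrangement of the resulting Feynman--Kac expectation against the motion semigroup $\mathtt{P}^{(i)}$, produces an integral equation of the form
\begin{equation*}
\mathrm{e}^{-U^{(\boldsymbol{f},i)}_t\boldsymbol{h}(x)}=\mathtt{P}^{(i)}_t\mathrm{e}^{-h_i}(x)+\int_0^t\!\int_{\R^d}\mathtt{P}^{(i)}_s(x,\ud y)\,H_i\!\left(\boldsymbol{V}^\dag_{t-s}\boldsymbol{f}(y),\mathrm{e}^{-\boldsymbol{U}^{(\boldsymbol{f})}_{t-s}\boldsymbol{h}(y)}\right)\ud s,
\end{equation*}
with source $H_i(\boldsymbol{\lambda},\boldsymbol{s}):=q_i\widetilde{G}_i(\boldsymbol{\lambda},\boldsymbol{s})-(\phi(i,\boldsymbol{\lambda})+q_i)s_i$ and $\widetilde{G}_i(\boldsymbol{\lambda},\boldsymbol{s}):=\sum_{\boldsymbol{j}}p^{(i)}_{\boldsymbol{j}}\int\eta^{(i)}_{\boldsymbol{j}}(\ud\boldsymbol{z})\mathrm{e}^{-[\boldsymbol{\lambda},\boldsymbol{z}]}\prod_k s_k^{j_k}$.

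The main, and essentially only, obstacle is the algebraic identity
\begin{equation*}
H_i(\boldsymbol{\lambda},\boldsymbol{s})=\frac{1}{w_i}\bigl[\psi^\dag(i,-\boldsymbol{w}\cdot\boldsymbol{s}+\boldsymbol{\lambda})-\psi^\dag(i,\boldsymbol{\lambda})\bigr],\qquad \boldsymbol{s}\in[0,1]^\ell,\ \boldsymbol{\lambda}\in\R^\ell_+,\ i\in S,
\end{equation*}
which turns the displayed equation into \eqref{cthm1}. To verify it, I would substitute the explicit forms of $w_iq_ip^{(i)}_{\boldsymbol{j}}\eta^{(i)}_{\boldsymbol{j}}(\ud\boldsymbol{z})$ from \eqref{p^i definition}--\eqref{eq:eta}, apply the multinomial identity \eqref{eq:binom} to resum the series in $\boldsymbol{j}$ back into the integrand $(\mathrm{e}^{[\boldsymbol{w}\cdot\boldsymbol{s},\boldsymbol{y}]}-1-[\boldsymbol{w}\cdot\boldsymbol{s},\boldsymbol{y}])\mathrm{e}^{-[\boldsymbol{w}+\boldsymbol{\lambda},\boldsymbol{y}]}\Pi(i,\ud\boldsymbol{y})$, and then use $\boldsymbol{\psi}(\boldsymbol{w})=\boldsymbol{0}$ (Proposition \ref{finite extinction}) together with the representation \eqref{psiBtilde}; both sides then collapse to the same L\'evy--Khintchine expression in the shifted argument $\boldsymbol{\lambda}+\boldsymbol{w}-\boldsymbol{w}\cdot\boldsymbol{s}$. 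Uniqueness of a $[0,1]^\ell$-valued locally bounded solution to \eqref{cthm1} follows from a Gronwall argument based on the local Lipschitz continuity of $\boldsymbol{\psi}^\dag$ on the relevant range. Finally, almost sure finiteness of $\|\boldsymbol{\Lambda}_t\|$ is obtained by setting $\boldsymbol{h}=\boldsymbol{0}$ and taking $\boldsymbol{f}=\theta\boldsymbol{1}\downarrow\boldsymbol{0}$ in \eqref{laplceexpjoint}: monotone convergence yields $\boldsymbol{V}^\dag_t\boldsymbol{f}\downarrow\boldsymbol{0}$, and the integral equation forces $\boldsymbol{U}^{(\boldsymbol{f})}_t\boldsymbol{0}\downarrow\boldsymbol{0}$ (the limiting equation reduces to the fixed-point equation of the conservative MBDP $\boldsymbol{Z}$ at $\boldsymbol{s}=\boldsymbol{1}$), so $\widehat{\mathbb{E}}_{(\boldsymbol{\mu},\boldsymbol{\nu})}[\mathrm{e}^{-\theta\|\boldsymbol{\Lambda}_t\|}]\to 1$ as $\theta\downarrow 0$.
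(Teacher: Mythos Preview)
Your proposal is correct and follows essentially the same route as the paper: peel off $\widetilde{\boldsymbol{X}}$ by independence, apply Campbell's formula to the two Poisson immigrations (the paper's Lemma~\ref{lemma1}), run a first-split/Dynkin decomposition for the branch-point immigration (the paper's Lemma~\ref{lemma2}, which invokes Proposition~2.9 of \cite{Z} for the Duhamel step), verify the same algebraic identity reducing the combined source to $\psi^\dag(i,-\boldsymbol{w}\cdot\boldsymbol{s}+\boldsymbol{\lambda})-\psi^\dag(i,\boldsymbol{\lambda})$, and conclude uniqueness by Gronwall from the local Lipschitz property of $\boldsymbol{\psi}^\dag$. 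Your explicit $\theta\downarrow 0$ argument for finite mass is a slight elaboration of what the paper leaves implicit.
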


Finally, we state the main result of this paper which, actually, is a consequence of Theorem \ref{thm1}. To be more precise, we consider a randomised  version of the law  $\mathbb{P}_{(\boldsymbol{\nu},\boldsymbol{\mu})}$  by replacing the deterministic choice of $\boldsymbol{\nu}$ in such a way that for each $i\in S$, $\nu_i$ is a Poisson random measure in $\mathbb{R}^d$ having intensity $w_i\mu_i$. The resulting law is denoted by $\widehat{\mathbb{P}}_{\boldsymbol{\mu}}$.

\begin{theorem}\label{mainthm}
For any $\boldsymbol{\mu}\in\mathcal{M}(\mathbb{R}^d)^\ell$ the process $(\boldsymbol{\Lambda},\widehat{\mathbb{P}}_{\boldsymbol{\mu}})$ is Markovian and has the same law as $(\boldsymbol{X}, \mathbb{P}_{\boldsymbol{\mu}})$.
\end{theorem}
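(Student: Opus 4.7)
The plan is to exploit Theorem \ref{thm1} after randomising the initial backbone configuration, and then to recognise the resulting joint Laplace functional of $(\boldsymbol{\Lambda}_t,\boldsymbol{Z}_t)$ as that of $\boldsymbol{X}_t$ under $\mathbb{P}_{\boldsymbol{\mu}}$ together with a superimposed Poisson thinning. Applying the Poisson exponential formula to $\exp\{-\langle\boldsymbol{U}_t^{(\boldsymbol{f})}\boldsymbol{h},\boldsymbol{\nu}\rangle\}$ inside \eqref{laplceexpjoint} immediately yields
\[
\widehat{\mathbb{E}}_{\boldsymbol{\mu}}\left[\mathrm{e}^{-\langle\boldsymbol{f},\boldsymbol{\Lambda}_t\rangle-\langle\boldsymbol{h},\boldsymbol{Z}_t\rangle}\right]=\exp\Bigl\{-\sum_{i\in S}\int_{\R^d}\bigl[V_t^{\dag,(i)}\boldsymbol{f}(x)+w_i\bigl(1-\mathrm{e}^{-U_t^{(\boldsymbol{f},i)}\boldsymbol{h}(x)}\bigr)\bigr]\mu_i(\mathrm{d}x)\Bigr\}.
\]

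The analytical heart of the proof is the identity
\[
V_t^{(i)}\bigl(\boldsymbol{f}+\boldsymbol{w}\cdot(\boldsymbol{1}-\mathrm{e}^{-\boldsymbol{h}})\bigr)(x)=V_t^{\dag,(i)}\boldsymbol{f}(x)+w_i\bigl(1-\mathrm{e}^{-U_t^{(\boldsymbol{f},i)}\boldsymbol{h}(x)}\bigr),\qquad i\in S,
\]
which I would establish by verifying that the right-hand side satisfies the integral equation \eqref{inteq_u} driven by $\boldsymbol{f}+\boldsymbol{w}\cdot(\boldsymbol{1}-\mathrm{e}^{-\boldsymbol{h}})$, and then invoking uniqueness from Proposition \ref{Proposition1}. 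Starting from \eqref{semi-cond} and \eqref{cthm1}, multiplying the latter by $w_i$ and using $\mathtt{P}_t^{(i)}w_i=w_i$ (conservativity of $\xi^{(i)}$), $\psi^\dag(i,\cdot)=\psi(i,\cdot+\boldsymbol{w})$ and $\boldsymbol{V}_{t-s}^\dag\boldsymbol{f}+\boldsymbol{w}=\boldsymbol{V}_{t-s}(\boldsymbol{f}+\boldsymbol{w})$, the two non-local contributions telescope into the single integrand $\psi\bigl(i,\,\boldsymbol{V}_{t-s}^{\dag}\boldsymbol{f}+\boldsymbol{w}-\boldsymbol{w}\cdot\mathrm{e}^{-\boldsymbol{U}_{t-s}^{(\boldsymbol{f})}\boldsymbol{h}}\bigr)$, exactly of the form demanded by \eqref{inteq_u}. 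Specialising $\boldsymbol{h}=\boldsymbol{0}$, the displayed Laplace functional above collapses to $\exp\{-\langle\boldsymbol{V}_t\boldsymbol{f},\boldsymbol{\mu}\rangle\}$, matching that of $\boldsymbol{X}_t$ under $\mathbb{P}_{\boldsymbol{\mu}}$; for general $\boldsymbol{h}$ it rewrites as $\exp\{-\langle\boldsymbol{V}_t(\boldsymbol{f}+\boldsymbol{w}\cdot(\boldsymbol{1}-\mathrm{e}^{-\boldsymbol{h}})),\boldsymbol{\mu}\rangle\}$, which furthermore shows that, conditionally on $\boldsymbol{\Lambda}_t$, the random measure $\boldsymbol{Z}_t$ is Poisson with intensity vector $\boldsymbol{w}\cdot\boldsymbol{\Lambda}_t$, so the Poissonisation of the backbone propagates through time.

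To upgrade from equality of one-dimensional laws to the full Markov property (and hence equality of path-space laws), I would combine the Markov property of the joint process $(\boldsymbol{\Lambda},\boldsymbol{Z})$ (noted before Theorem \ref{thm1}) with the Poissonisation just established. For $s,t\ge 0$, conditioning on $\mathcal{F}_s^{(\boldsymbol{\Lambda},\boldsymbol{Z})}$ and applying Theorem \ref{thm1} to the process restarted from $(\boldsymbol{\Lambda}_s,\boldsymbol{Z}_s)$ gives
\[
\widehat{\mathbb{E}}_{\boldsymbol{\mu}}\bigl[\mathrm{e}^{-\langle\boldsymbol{f},\boldsymbol{\Lambda}_{s+t}\rangle}\,\bigl|\,\mathcal{F}_s^{(\boldsymbol{\Lambda},\boldsymbol{Z})}\bigr]=\exp\{-\langle\boldsymbol{V}_t^\dag\boldsymbol{f},\boldsymbol{\Lambda}_s\rangle-\langle\boldsymbol{U}_t^{(\boldsymbol{f})}\boldsymbol{0},\boldsymbol{Z}_s\rangle\},
\]
and taking expectation with respect to $\mathcal{F}_s^{\boldsymbol{\Lambda}}$ while averaging out $\boldsymbol{Z}_s$ through its Poisson conditional law given $\boldsymbol{\Lambda}_s$ reduces this, via the identity of the second paragraph with $\boldsymbol{h}=\boldsymbol{0}$ applied to $\boldsymbol{\Lambda}_s$, to $\exp\{-\langle\boldsymbol{V}_t\boldsymbol{f},\boldsymbol{\Lambda}_s\rangle\}=\mathbb{E}_{\boldsymbol{\Lambda}_s}[\mathrm{e}^{-\langle\boldsymbol{f},\boldsymbol{X}_t\rangle}]$. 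Since $\boldsymbol{\Lambda}_0=\boldsymbol{\mu}$ deterministically, this identifies $(\boldsymbol{\Lambda},\widehat{\mathbb{P}}_{\boldsymbol{\mu}})$ and $(\boldsymbol{X},\mathbb{P}_{\boldsymbol{\mu}})$ as Markov processes with the same initial distribution and transition semigroup, hence in law on path space. The main obstacle is the telescoping identity in the second paragraph: the non-local part of $\psi$ has to be rearranged so that the terms in \eqref{inteq_u}, \eqref{semi-cond} and \eqref{cthm1} align exactly; once this is in hand, the remainder is routine Poisson-conditioning bookkeeping.
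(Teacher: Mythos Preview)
Your proposal is correct and follows essentially the same route as the paper: both randomise $\boldsymbol{\nu}$ via the Poisson exponential formula in \eqref{laplceexpjoint}, establish the key identity $V_t^{\dag,(i)}\boldsymbol{f}+w_i(1-\mathrm{e}^{-U_t^{(\boldsymbol{f},i)}\boldsymbol{h}})=V_t^{(i)}(\boldsymbol{f}+\boldsymbol{w}\cdot(\boldsymbol{1}-\mathrm{e}^{-\boldsymbol{h}}))$ by showing the left-hand side solves \eqref{inteq_u} and invoking uniqueness, and then read off both the equality of marginals (via $\boldsymbol{h}=\boldsymbol{0}$) and the conditional Poisson structure of $\boldsymbol{Z}_t$ given $\boldsymbol{\Lambda}_t$ to propagate the Markov property. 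The only cosmetic difference is that the paper phrases the analytical step as the two-sided identity \eqref{cond2} (both sides shown to equal $V_t^{(i)}(\boldsymbol{f}+\boldsymbol{w}\cdot(\boldsymbol{1}-\mathrm{e}^{-\boldsymbol{h}}))$), whereas you state it directly as an equality with $V_t^{(i)}$.
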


The remainder of this paper is devoted to the proofs of all the results presented in the Introduction.

\section{Proofs}\label{proofs}

We first present the proofs of Propositions  \ref{Proposition1},\ref{finite extinction} and \ref{Dyn-Kuz} which are devoted to the construction of the multitype superprocess  $\boldsymbol{X}$ and its associated Dynkin-Kuznetsov measures.

\begin{proof}[Proof of Proposition \ref{Proposition1}]
Recall that  $(\mathtt{P}_t^{(i)}, t\geq 0)$ denotes  the semigroup of the diffusion $(\xi^{(i)}_t, t\geq 0)$.  
 We introduce   $\Xi=(\Xi_t, t\ge 0)$  a Markov process in the product space $\R^d\times S$ whose transition semigroup $(\mathtt{T}_t,t\geq 0)$  is given  by
 \begin{equation}
 \label{ttdefinition}
 {\mathtt{T}}_tf(x,i)=\int_{\R^d}f(y,i)\mathtt{P}_t^{(i)}(x,\ud y) \qquad \textrm{for}\quad x\in \R^d,
  \end{equation}
where $f$ is a  bounded Borel function on $\R^d\times S$.  We denote the aforementioned set of functions by $\mathcal{B}( \R^d\times S)$ and we use  $\mathcal{M}(\R^d\times S)$ for the space of finite Borel measures on $\R^d\times S$, endowed with the topology of weak convergence.

For each ${f}\in \mathcal{B}( \R^d\times S)$, we introduce the operator
 $${\Psi}(x,i,f)=\psi(i,(f(x,1),\cdots,f(x,\ell))).$$
Recall that  for a measure $\mu\in \mathcal{M}(\R^d\times S)$,  we use the notation  
\[
\langle f,\mu\rangle=\int_{\R^d\times S}f(x,i)\mu(\ud(x,i)).
\]
 Following the theory developed in the monograph of Li \cite{Z}, we observe that the operator $\Psi$ satisfies equation (2.26) in \cite{Z},  and that the assumptions of  Theorems 2.21 and 5.6, in the same monograph,  are fulfilled.  Therefore there exits a strong Markov superprocess $\mathcal{Z}=(\mathcal{Z}_t,\mathcal{G}_t, \mathbb{Q}_{\mu})$  with state space $\mathcal{M}(\mathbb{R}^d\times S)$,  and transition probabilities determined by
 \begin{align*}
 \mathbb{Q}_{\mu}\left[\mathrm{e}^{-\langle f,\mathcal{Z}_t\rangle}\right]=\exp\Big\{- \langle \mathtt{V}_tf,\mu\rangle\Big\},\qquad t\geq 0,
 \end{align*}
 where $ f\in \mathcal{B}( \R^d\times S)$ and  $t\mapsto \mathtt{V}_tf$ is the unique locally bounded positive solution to

 \begin{equation*}
\mathtt{V}_tf(x, i)=\mathtt{T}_t f(x,i)-\int_0^t\ud s\int_{\R^d\times S}\Psi(y,j,\mathtt{V}_{t-s}{f})\mathtt{T}_s(x,i,\ud (y,j)).
 \end{equation*}

For $i\in S$ and $\mu\in \mathcal{M}(\mathbb{R}^d\times S)$,  we define ${\tt U}_i\mu\in  \mathcal{M}(\mathbb{R}^d)$ by ${\tt U}_i\mu(B)=\mu(B\times\{i\})$ for  $B\in \mathcal{B}(\R^d)$, the Borel sets in $\R^d$. Observe that $\mu\mapsto ({\tt U}_i\mu)_{i\in S}$  is a homeomorphism between $\mathcal{M}(\mathbb{R}^d\times S)$ and $\mathcal{M}(\mathbb{R}^d)^{\ell}$. 
 In other words,  we can define a strong Markov process $\boldsymbol{X}\in \mathcal{M}(\R^d)^\ell$ associated with  $\mathcal{Z}$ and $(U_i)_{i\in S}$  as follows. For each $i\in S$, we define  $X_t(i,\ud x):={\tt U}_i\mathcal{Z}_t(\ud x)=\mathcal{Z}_t(\ud x\times \{i\})$ with  probabilities $\mathbb{P}_{\boldsymbol{ \mu}}:=\mathbb{Q}_{\mu}$, where  $\boldsymbol{\mu}=(\mu_1,\cdots,\mu_{\ell})\in\mathcal{M}(\R^d)^\ell$,  and each $\mu_i={\tt U}_i\mu$. In a similar way, there is a  homeomorphism between $\mathcal{B}(\R^d)^\ell$ and  $\mathcal{B}(\R^d\times S)$; that is to say for  $\boldsymbol{f}\in\mathcal{B}(\R^d)^\ell$ we define $f(x,i)=f_i(x)$. By applying the aforementioned homeomorphisms, we deduce that $(\boldsymbol{X}_t,\mathbb{P}_{\boldsymbol{ \mu}})$ satisfies
 \eqref{laplace},  and \eqref{inteq_u} has a unique locally bounded solution.
 \end{proof}

We now prove Proposition \ref{finite extinction}, which will be very useful for  the existence of Dynkin-Kutznetsov measures.
\begin{proof}[Proof of  Proposition \ref{finite extinction}]
	 By \eqref{w} and the branching property of $\boldsymbol{X}$ we have 
	\begin{equation}
	\label{extinction branching}
	\mathbb{P}_{\boldsymbol{\mu}}(\mathcal{E})=\mathrm{e}^{-\langle \boldsymbol{w},\boldsymbol{\mu}\rangle}.
	\end{equation}
	Furthermore by conditioning the event $\mathcal{E}$ on  $\mathcal{F}_t$ and using the Markov property,  we obtain that 
	\begin{equation*}
	\begin{split}
	\mathrm{e}^{-\langle \boldsymbol{w},\boldsymbol{\mu}\rangle}&=\mathbb{E}_{\boldsymbol{\mu}}\Big[\mathbb{E}[\boldsymbol{1}_\mathcal{E}|\mathcal{F}_t]\Big]=\mathbb{E}_{\boldsymbol{\mu}}\Big[\mathbb{E}_{\boldsymbol{X}_t}[\boldsymbol{1}_\mathcal{E}]\Big]=\mathbb{E}_{\boldsymbol{\mu}}\left[ \mathrm{e}^{-\langle \boldsymbol{w},\boldsymbol{X}_t\rangle}\right].
	\end{split}
	\end{equation*}
	Thus  from \eqref{inteq_u} and the assumption \eqref{extinction assumption} we also get that $\boldsymbol{\psi}(\boldsymbol{w})=\boldsymbol{0}$.
	 
For the second part of the statement, we recall the definition of the total mass vector $\boldsymbol{Y}=(\boldsymbol{Y}_t,t\geq 0)$  whose entries satisfy
$Y_t(i)=X_t(i,\R^d)$. From identity \eqref{ec mcsb} and assumption \eqref{extinction assumption}, we know that for each $i\in S$, there exists a positive deterministic time $T_i$ such that
	\[
	\mathbf{P}_{\boldsymbol{e}_i}(\|\boldsymbol{Y}_t\|=0 )=\mathrm{e}^{-\lim_{\boldsymbol{\theta}\hookrightarrow\infty} v_t(i,\boldsymbol{\theta})}\left\{
	\begin{array}{ll}
	=0 & \textrm{ for } t< T_i,\\
	>0 &\textrm{ for } t>T_i,
	\end{array}\right.
	\]
	where $\boldsymbol{v}_t(i,\boldsymbol{\theta})$ is  given by \eqref{ec v for mcsb},  and we recall that $\boldsymbol{\theta}\hookrightarrow\infty$ means that each coordinate of $\boldsymbol{\theta}$ goes to $\infty$. 

Next, we define the sets $S_1:=\{i\in S: T_i=0\}$ and $S_2:=\{i\in S: T_i>0\}$. For a vector $\boldsymbol{y}=(y_1,\cdots,y_\ell)$, we denote its support by $\textrm{supp}(\boldsymbol{y}):=\{i\in S: y_i\neq 0\}$. 
	Thus, the  proof will be  completed if we show that $S_2=\emptyset.$ We proceed by contradiction.
	
 Let us assume that $S_2\neq\emptyset$ and define $T:=\inf\{T_i: i\in S_2\}$ which is strictly positive by definition. 
Take $i\in S_2$ and observe from  the Markov property that 
\[
\begin{split}
0&=\mathbf{P}_{\boldsymbol{e}_i}\Big(\|\boldsymbol{Y}_{3T/4}\|=0 \Big)\\
&\geq \mathbf{P}_{\boldsymbol{e}_i}\Big(\|\boldsymbol{Y}_{3T/4}\|=0, \textrm{supp}(\boldsymbol{Y}_{T/2}) \subset S_1\Big)\\
&=\mathbf{E}_{\boldsymbol{e}_i}\left[\mathbf{P}_{\boldsymbol{Y}_{T/2}}\Big(\|\boldsymbol{Y}_{T/4}\|=0\Big),  \textrm{supp}(\boldsymbol{Y}_{T/2}) \subset S_1 \right].
\end{split}
\]
By the branching property, if $\boldsymbol{y}$ is a vector such that $\textrm{supp}(\boldsymbol{y})\subset S_1$ then $\mathbf{P}_{\boldsymbol{y}}(\|\boldsymbol{Y}_t\|=0 )>0$, for all $t>0$. Therefore, we necessarily have 
 \[
 0=\mathbf{P}_{\boldsymbol{e}_i}\Big( \textrm{supp}(\boldsymbol{Y}_{T/2})\subset S_1\Big),
 \]
  and implicitly
$$1=\mathbf{P}_{\boldsymbol{e}_i}\Big(\textrm{supp}(\boldsymbol{Y}_{T/2}) \cap S_2\neq \emptyset \Big)=\mathbf{P}_{\boldsymbol{e}_i}\Big(\|\boldsymbol{Y}_{T/2}\|>0 \Big), \qquad \mbox{for all } i\in S_2.$$
Hence, using the branching property again,  if $\boldsymbol{y}$ is a vector such that $\textrm{supp}(\boldsymbol{y})\cap S_2\neq \emptyset$, we have 
$$1=\mathbf{P}_{\boldsymbol{y}}\Big(\|\boldsymbol{Y}_{T/2}\|>0 \Big)=\mathbf{P}_{y}\Big(\textrm{supp}(\boldsymbol{Y}_{T/2}) \cap S_2\neq \emptyset \Big).$$
Finally, we use the  Markov property recursively and the previous equality, to deduce  that for all $k\ge 1$,
$$\mathbf{P}_{\boldsymbol{y}}\Big(\|\boldsymbol{Y}_{kT/2}\|>0 \Big)=1  \qquad \mbox{for all } i\in S_2,$$
which is inconsistent  with the definitions of $T$ and $T_i$. In other words, $S_2=\emptyset$. This completes the proof.
\end{proof}

We now prove the existence of the Dynkin-Kuznetsov measures.
\begin{proof}[Proof of  Proposition \ref{Dyn-Kuz}]
Let us denote by $\mathcal{M}^0(\mathbb{R}^d\times S):=\mathcal{M}(\mathbb{R}^d\times S)\setminus \{ 0\}$,  where $0$ is the null measure. Consider the Markov superprocess $\mathcal{Z}$ introduced in the previous proof. Let  $({\tt Q}_t, t\geq 0)$ and $({\tt V}_t, t\geq 0)$ be the transition and cumulant semigroups associated with $\mathcal{Z}$.  
By Theorem 1.36 in \cite{Z}, $\mathtt{V}_t$ has the following representation 
\[
\mathtt{V}_t f(x,i)=\int_{\mathbb{R}^d\times S} f(y,j)\Lambda_t(x,i,\mathrm{d}(y,j))+\int_{\mathcal{M}^0(\mathbb{R}^d\times S)}\left( 1-\mathrm{e}^{-\langle f,\nu\rangle}\right)L_t(x,i,\mathrm{d}\nu), \quad t\geq 0,
\]
where  $f$ is a positive Borel function on $\mathbb{R}^d\times S$, $\Lambda_t(x,i,\mathrm{d}(y,j))$ is a bounded kernel on $\mathbb{R}^d\times S$, and $(1\wedge \langle 1,\nu\rangle)L_t(x,i,\mathrm{d}\nu)$ is a bounded kernel from $\mathbb{R}^d\times S$ to $\mathcal{M}^0(\mathbb{R}^d\times S)$.

Let $\widetilde{\mathcal{X}}^+$ be the space of c\`adl\`ag paths $t\rightarrow \widetilde{w}_t$ from $[0,\infty)$ to $\mathcal{M}(\mathbb{R}^d\times S)$ having the null measure as a trap.  Let $({\tt Q}_t^0, t\ge 0)$ be the restriction of $({\tt Q}_t, t\ge 0)$ to $\mathcal{M}^0(\mathbb{R}^d\times S)$ and 
\[
E_0:=\Big\{ (x,i)\in \mathbb{R}^d\times S:\Lambda_t(x,i,\mathbb{R}^d\times S)=0,\; \textrm{ for all }\,\, t>0\Big\}.
\]
By Proposition 2.8 in \cite{Z},  for all $(x,i)\in E_0$ the family of measures $(L_t(x,i,\cdot), t\geq 0)$ on $\mathcal{M}^0(\mathbb{R}^d\times S)$ constitutes an entrance law for $({\tt Q}_t^0, t\geq 0)$.
Therefore, by Theorem A.40 of \cite{Z} for all $(x,i)\in E_0$ there exists a unique $\sigma$-finite measure $\widetilde{\mathbb{N}}_{(x,i)}$ on $\widetilde{\mathcal{X}}^+$ such that $\widetilde{\mathbb{N}}_{(x,i)}(\{ 0\})=0$,  and for any $0<t_1<\cdots<t_n<\infty$
\[
\widetilde{\mathbb{N}}_{(x,i)}(\mathcal{Z}_{t_1}\in\mathrm{d}\nu_1,\mathcal{Z}_{t_2}\in\mathrm{d}\nu_2,\dots,\mathcal{Z}_{t_n}\in\mathrm{d}\nu_n)=L_{t_1}(x,i,\mathrm{d}\nu_1){\tt Q}_{t_2-t_1}^0(\nu_1,\mathrm{d}\nu_2)\dots {\tt Q}_{t_n-t_{n-1}}^0 (\nu_{n-1},\mathrm{d}\nu_n).
\]
It follows that for all $t>0$, $(x,i)\in E_0$,  and $f \in\mathcal{B}(\mathbb{R}^d\times S)$ positive, we have
\[
\widetilde{\mathbb{N}}_{(x,i)}\left( 1-\mathrm{e}^{-\langle f,\mathcal{Z}_t\rangle}\right)=\int_{\mathcal{M}^0(\mathbb{R}^d\times S)}\left( 1-\mathrm{e}^{-\langle f,\nu\rangle}\right)L_t(x,i,\mathrm{d}\nu)=\mathtt{V}_tf(x,i).
\]

Recall the homeomorphism $\mu\mapsto (U_i\mu)_{i\in S}$   and the definition of the superprocess $\boldsymbol{X}$ from the proof of Proposition \ref{Proposition1}. By taking the constant function $f(x,i)=\lambda\in \R$, and  using the definitions of $\mathtt{V}_t, {\tt Q}_t$,  we deduce that 
$$-\log\mathbb{E}_{\boldsymbol{e}_i \delta_x}\Big[\mathrm{e}^{-\lambda\langle \boldsymbol{1},\boldsymbol{X}_t\rangle}\Big]=\lambda  \Lambda_t(x,i,\mathbb{R}^d\times S)+\int_{\mathcal{M}^0(\mathbb{R}^d\times S)}\left( 1-\mathrm{e}^{-\lambda\langle 1 ,\nu\rangle}\right)L_t(x,i,\mathrm{d}\nu).$$
 If we take  $\lambda$ goes to infinity, the left hand side of the above identity converges to $-\log \mathbb{P}_{\boldsymbol{e}_i \delta_x}(\|\boldsymbol{X}_t\|=0)$ which is finite by Proposition \eqref{finite extinction}. Henceforth, $\Lambda_t(x,i,\mathbb{R}^d\times S)=0$ and $(x,i)\in E_0$. 

Next, recall that $\mathcal{X}$ denotes the space  of c\`adl\`ag paths  from $[0,\infty)$ to $\mathcal{M}(\mathbb{R}^d)^\ell$. Then $({\tt U}_i)_{i\in S}$  induces an homeomorphism   between $\widetilde{\mathcal{X}}$ and $\mathcal{X}$. More precisely, the homeomorphism  $\mathcal{U}:\widetilde{\mathcal{X}}\rightarrow\mathcal{X}$   is given by $\widetilde{w}_t\rightarrow \boldsymbol{w}_t=(w_t(1),\cdots, w_t(\ell))$ where for all $i\in S$ the measure in the $i$th coordinate is given by  $w_t(i,B)=\widetilde{w}_t(B\times\{i\})$.  
This implies that for all $(x,i)\in \mathbb{R}^d\times S$ we can define the measures $\mathbb{N}_{x\boldsymbol{e}_i}$  on $\mathcal{X}$ given by $\mathbb{N}_{x\boldsymbol{e}_i}(B):= \widetilde{\mathbb{N}}_{(x,i)}(\mathcal{U}^{-1}(B))$.  In other words, we obtain 
\begin{equation*}
\mathbb{N}_{x\boldsymbol{e}_i}\left(1-\mathrm{e}^{-\langle \boldsymbol{f},\boldsymbol{X}_t\rangle}\right)=-\log\mathbb{E}_{\boldsymbol{e}_i \delta_x}\left[\mathrm{e}^{-\langle \boldsymbol{f},\boldsymbol{X}_t\rangle}\right],
\end{equation*}
for all $\boldsymbol{f}\in \mathcal{B}(\R^d)^\ell$ and $t\geq0$.

\end{proof}

It is important to note that   the Dynkin-Kuznetsov measures $\mathbb{N}^\dag$  associated to the multitype superprocess conditioned on extinction  are  also well defined since $|\log \mathbb{P}^{\dag}_{\delta_x\boldsymbol{e}_i}(\mathcal{E})|<\infty$.

We now prove Proposition  \ref{proposition2}.
\begin{proof}[Proof of Proposition \ref{proposition2}]
	Using \eqref{extinction branching}, \eqref{extinction assumption} and the Markov property, we have for $\boldsymbol{f}\in\mathcal{B}^+(\mathbb{R}^d)^\ell$ 
	\begin{align*}
	\mathbb{E}_{\boldsymbol{\mu}}^\dag\left[\mathrm{e}^{-\langle \boldsymbol{f},\boldsymbol{X}_t\rangle}\right]& 
	=\mathrm{e}^{\langle \boldsymbol{w},\boldsymbol{\mu}\rangle}\mathbb{E}_{\boldsymbol{\mu}}\left[\mathrm{e}^{-\langle \boldsymbol{f},\boldsymbol{X}_t\rangle}\boldsymbol{1}_{\mathcal{E}}\right]\\
	&=\mathrm{e}^{\langle \boldsymbol{w},\boldsymbol{\mu}\rangle}\mathbb{E}_{\boldsymbol{\mu}}\left[\mathrm{e}^{-\langle \boldsymbol{f},\boldsymbol{X}_t\rangle}\mathbb{P}_{\boldsymbol{X}_t}(\mathcal{E})\right]\\
	&=\mathrm{e}^{\langle \boldsymbol{w},\boldsymbol{\mu}\rangle}\mathbb{E}_{\boldsymbol{\mu}}\left[\mathrm{e}^{-\langle \boldsymbol{f},\boldsymbol{X}_t\rangle}\mathrm{e}^{-\langle \boldsymbol{w},\boldsymbol{X}_t\rangle}\right]\\
	&=\mathrm{e}^{-\langle \boldsymbol{V}_t (\boldsymbol{f}+\boldsymbol{w})-\boldsymbol{w},\boldsymbol{\mu}\rangle}.
	\end{align*}
Since $ \boldsymbol{V}_t (\boldsymbol{f}+\boldsymbol{w})$ satisfies \eqref{inteq_u}, using  the definitions of $\boldsymbol{V}^{\dag}_t \boldsymbol{f}$ and $\boldsymbol{\psi}^\dag$ we obtain that $\boldsymbol{V}^{\dag}_t \boldsymbol{f}$  satisfies \eqref{semi-cond}. Recalling that  $\boldsymbol{\psi}(\boldsymbol{w})=\boldsymbol{0}$ and computing $\boldsymbol{\psi}(\boldsymbol{\theta}+\boldsymbol{w})-\boldsymbol{\psi}(\boldsymbol{w})$, we deduce that 
	\begin{equation}\label{psistar}
	\psi^{\dag}(i,\boldsymbol{\theta})=-[ \boldsymbol{\theta},{\boldsymbol{B}}^\dag\boldsymbol{e}_i]+\beta_i \theta_i^2+\int_{\mathbb{R}_+^\ell}\left( \mathrm{e}^{-[ \boldsymbol{\theta},\boldsymbol{y}]}-1+\theta_i y_i\right)\mathrm{e}^{-[ \boldsymbol{w},\boldsymbol{y}]}\Pi(i,\ud\boldsymbol{y}),
	\end{equation}
	where 
	\begin{equation}\label{Bdag}
	{B}_{ij}^\dag=B_{ij}-\left(2\beta_i w_i +\int_{\mathbb{R}_+^\ell}\left( 1-\mathrm{e}^{-[ \boldsymbol{y},\boldsymbol{w}]} \right)y_i\Pi(i,\mathrm{d}\boldsymbol{y})\right)\boldsymbol{1}_{\{j=i\}}.
	\end{equation}
This implies that $\boldsymbol{\psi}^\dag$ is a branching mechanism and therefore the solution of \eqref{semi-cond} is unique. In other words,  $\boldsymbol{X}$ under $\mathbb{P}^\dag_{\boldsymbol{\mu}}$ is a multitype superprocess with branching mechanism given by $\boldsymbol{\psi}^\dag(\boldsymbol{\theta})$.
\end{proof}

In order to proceed with the proof of Theorem \ref{thm1},  the following two lemmas are necessary. 
\begin{lemma}\label{lemma1}
	For each $\boldsymbol{f}\in \mathcal{B}(\mathbb{R}^d)^\ell$, $\boldsymbol{\nu}\in\mathcal{M}_a(\mathbb{R}^d)^\ell$,  $\boldsymbol{\mu}\in\mathcal{M}(\mathbb{R}^d)^\ell$, and $t\geq0$ we have
	\begin{equation}
	\widehat{\mathbb{E}}_{(\boldsymbol{\mu},\boldsymbol{\nu})}\left[\left.\mathrm{e}^{-\langle \boldsymbol{f}, \boldsymbol{I}_t^{\mathbb{N}^\dag}+\boldsymbol{I}_t^{\mathbb{P}^\dag}\rangle} \right| (\boldsymbol{Z}_s,s\leq t)\right]=\exp\left\{-\int_0^t \langle \boldsymbol{\phi}(\boldsymbol{V}^\dag_{t-r}\boldsymbol{f}), \boldsymbol{Z}_r\rangle \mathrm{d}r\right\},
	\end{equation}
	where $\boldsymbol{\phi}$ is given by \eqref{phi} and $\boldsymbol{V}^\dag_{t}\boldsymbol{f}$ satisfies \eqref{semi-cond}.
\end{lemma}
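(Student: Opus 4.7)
The plan is to condition on the backbone process $\boldsymbol{Z}$ and exploit the fact that, by construction, $\boldsymbol{I}^{\mathbb{N}^\dag}$ and $\boldsymbol{I}^{\mathbb{P}^\dag}$ are independent given $\boldsymbol{Z}$ and are superpositions of Poisson immigrations along the space--time trajectories of the particles in $\mathcal{T}$. Thanks to independence of these two immigration processes, the conditional Laplace functional factorises, and within each factor we can apply the exponential formula (Campbell's theorem) for Poisson point processes to each particle $u\in\mathcal{T}$ alive at time $r\in[\tau_u,t\wedge\sigma_u)$.

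First I would compute the contribution of $\boldsymbol{I}^{\mathbb{N}^\dag}$. Applying the exponential formula and using Proposition \ref{Dyn-Kuz} applied to $\boldsymbol{X}$ under $\mathbb{P}^\dag$ (which is justified by the paragraph following Proposition \ref{Dyn-Kuz}), the Laplace functional of a single particle $u$ equals
\[
\exp\Big\{-\int_{\tau_u}^{t\wedge\sigma_u} 2\beta_{\gamma_u}\,\mathbb{N}^\dag_{z_u(r)\boldsymbol{e}_{\gamma_u}}\!\bigl(1-\mathrm{e}^{-\langle \boldsymbol{f},\boldsymbol{X}_{t-r}\rangle}\bigr)\,\mathrm{d}r\Big\}
=\exp\Big\{-\int_{\tau_u}^{t\wedge\sigma_u} 2\beta_{\gamma_u}\, V^{\dag,(\gamma_u)}_{t-r}\boldsymbol{f}(z_u(r))\,\mathrm{d}r\Big\}.
\]
Analogously, for $\boldsymbol{I}^{\mathbb{P}^\dag}$ the exponential formula combined with the Laplace transform of $\boldsymbol{X}$ under $\mathbb{P}^\dag_{\boldsymbol{y}\delta_{z_u(r)}}$, given by Proposition \ref{proposition2}, yields
\[
\exp\Big\{-\int_{\tau_u}^{t\wedge\sigma_u}\!\!\int_{\mathbb{R}^\ell_+}\!\bigl(1-\mathrm{e}^{-[\boldsymbol{V}^\dag_{t-r}\boldsymbol{f}(z_u(r)),\boldsymbol{y}]}\bigr)\,y_{\gamma_u}\,\mathrm{e}^{-[\boldsymbol{w},\boldsymbol{y}]}\,\Pi(\gamma_u,\mathrm{d}\boldsymbol{y})\,\mathrm{d}r\Big\}.
\]

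Next I would multiply these two contributions. Adding the integrands and using the explicit form of $\boldsymbol{\phi}$ in \eqref{phi}, the sum of the two exponents is exactly $\phi(\gamma_u,\boldsymbol{V}^\dag_{t-r}\boldsymbol{f}(z_u(r)))$ integrated against $\mathrm{d}r$ on $[\tau_u,t\wedge\sigma_u)$. By the conditional independence across distinct particles of $\mathcal{T}$, the full conditional Laplace functional becomes
\[
\exp\Big\{-\sum_{u\in\mathcal{T}}\int_{\tau_u}^{t\wedge\sigma_u}\phi\bigl(\gamma_u,\boldsymbol{V}^\dag_{t-r}\boldsymbol{f}(z_u(r))\bigr)\,\mathrm{d}r\Big\}.
\]
Finally, interchanging the sum and the integral via Fubini, and using that $\boldsymbol{Z}_r=\sum_{u\in\mathcal{T}:\tau_u\le r<\sigma_u}\boldsymbol{e}_{\gamma_u}\delta_{z_u(r)}$ so that
\[
\sum_{u\in\mathcal{T}}\mathbf{1}_{\{\tau_u\le r<t\wedge\sigma_u\}}\phi\bigl(\gamma_u,\boldsymbol{V}^\dag_{t-r}\boldsymbol{f}(z_u(r))\bigr)=\langle \boldsymbol{\phi}(\boldsymbol{V}^\dag_{t-r}\boldsymbol{f}),\boldsymbol{Z}_r\rangle,
\]
yields the claimed identity.

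The main obstacle is to justify the application of the exponential formula cleanly — in particular to verify the necessary integrability so that Campbell's theorem applies, and to ensure that the countable collection of Poisson immigrations over all $u\in\mathcal{T}$ with $\tau_u\le t$ can be handled simultaneously (here finiteness of the number of particles alive up to time $t$ under the MBDP dynamics is used). Once the exponential formula is in place, the remaining work is the purely algebraic matching of the two Poisson rates with the Lévy--Khintchine representation of $\boldsymbol{\phi}$ in \eqref{phi}.
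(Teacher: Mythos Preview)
Your proposal is correct and follows essentially the same approach as the paper: condition on $\boldsymbol{Z}$, apply Campbell's formula separately to the continuous and discontinuous Poisson immigrations, identify the resulting exponents via \eqref{NmeasureLaplace} and Proposition~\ref{proposition2} respectively, combine them using the explicit form of $\boldsymbol{\phi}$ in \eqref{phi}, and finally rewrite the sum over $\mathcal{T}$ as an integral against $\boldsymbol{Z}_r$. The paper does not explicitly discuss the integrability needed for Campbell's formula either, so your level of justification matches theirs.
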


\begin{proof}
As the different immigration mechanisms are independent given the backbone, we may look at the Laplace functional of the continuous and discontinuous immigrations  separately.
For the continuous immigration, we can condition on $\boldsymbol{Z}$, use Campbell's formula, then equation  (\ref{NmeasureLaplace})  for $\mathbb{N}^\dag$, and finally the definition of $\boldsymbol{V}^\dag_{t}\boldsymbol{f}(x)=({V}_{t}^{\dag,(1)}\boldsymbol{f}(x),\cdots,{V}_{t}^{\dag, (\ell)}\boldsymbol{f}(x))^{\tt t}$ to obtain 
\begin{equation*}
\begin{split}
\widehat{\mathbb{E}}_{(\boldsymbol{\mu},\boldsymbol{\nu})}\left[ \exp\{-\left.\langle\boldsymbol{ f}, \boldsymbol{I}_t^{\mathbb{N}^\dag}\rangle\}\right| (\boldsymbol{Z}_{s},s\leq t)\right]&=\exp\left\lbrace -\sum_{u\in\mathcal{T}} 2\beta_{\gamma_u}\int_{t\wedge \tau_u}^{t\wedge\sigma_u}\mathrm{d}r \mathbb{N}_{z_u(r)\boldsymbol{e}_{\gamma_u}}^\dag \Big(1-\mathrm{e}^{-\langle \boldsymbol{f}, \boldsymbol{X}_{t-r}\rangle}\Big)\right\rbrace\\
&=\exp\left\lbrace -\sum_{u\in\mathcal{T}}2\beta_{\gamma_u}\int_{t\wedge \tau_u}^{t\wedge \sigma_u}\mathrm{d}r V_{t-r}^{\dag, (\gamma_u)} \boldsymbol{f}(z_u(r)) \right\rbrace.
\end{split}
\end{equation*}
In a similar way, for the  discontinuous immigration, by conditioning on $\boldsymbol{Z}$,  using Campbell's formula and the definition of $\boldsymbol{V}^\dag_{t}\boldsymbol{f}$  we get
\begin{equation*}
\begin{split}
\widehat{\mathbb{E}}_{(\boldsymbol{\mu},\boldsymbol{\nu})}&\left[ \left.\exp\{-\langle \boldsymbol{f}, \boldsymbol{I}_t^{\mathbb{P}^\dag}\rangle\}\right| (\boldsymbol{Z}_{s},s\leq t)\right]\\
&=
\exp\left\lbrace -\sum_{u\in\mathcal{T}}\int_{t\wedge\tau_u}^{t\wedge \sigma_u}\mathrm{d}r \int_{\mathbb{R}_+^\ell} y_{\gamma_u} \mathrm{e}^{-[ \boldsymbol{w},\boldsymbol{y}]}  \mathbb{E}_{\boldsymbol{y}\delta_{z_u(r)}}^\dag \left[ 1-\mathrm{e}^{-\langle\boldsymbol{f},\boldsymbol{X}_{t-r}\rangle}\right]\Pi(\gamma_u,\mathrm{d}\boldsymbol{y})\right\rbrace\\
&=\exp\left\lbrace -\sum_{u\in\mathcal{T}}\int_{t\wedge\tau_u}^{t\wedge\sigma_u} \mathrm{d}r \int_{\mathbb{R}_+^\ell} y_{\gamma_u}  \mathrm{e}^{-[ \boldsymbol{w},\boldsymbol{y}]} \left( 1-\mathrm{e}^{-[\boldsymbol{V}_{t-r}^\dag\boldsymbol{f}(z_u(r)),\boldsymbol{y}]}\right)\Pi(\gamma_u,\mathrm{d}\boldsymbol{y})  \right\rbrace.
\end{split}
\end{equation*}
Therefore, by putting the pieces together we obtain the following
\begin{equation}\label{eq:contdiscont}
\widehat{\mathbb{E}}_{(\boldsymbol{\mu},\boldsymbol{\nu})}\left[ \left.\exp\left\lbrace -\langle \boldsymbol{f}, \boldsymbol{I}_t^{\mathbb{N}^\dag}+\boldsymbol{I}_t^{\mathbb{P}^\dag}\rangle \right\rbrace  \right| (\boldsymbol{Z}_s,s\leq t)\right]=\exp\left\lbrace -\sum_{u\in\mathcal{T}}\int_{t\wedge\tau_u}^{t\wedge \sigma_u}\phi(\gamma_u,
\boldsymbol{V}_{t-r}^\dag\boldsymbol{f}(z_u(r)))\mathrm{d}r\right\rbrace,
\end{equation}
where $\phi(i,\boldsymbol{\lambda})$  is given by formula \eqref{phi}. The previous equation is in terms of the tree $\mathcal{T}$. We want to rewrite it in terms of the multitype branching diffusion, thus 
\begin{equation*}
\begin{split}
\sum_{u\in\mathcal{T}}\int_{t\wedge\tau_u}^{t\wedge \sigma_u}\phi(\gamma_u,
\boldsymbol{V}_{t-r}^\dag\boldsymbol{f}(z_u(r))) \mathrm{d}r=&\sum_{i\in S}\sum_{u\in\mathcal{T},\gamma_u=i}\int_{t\wedge\tau_u}^{t\wedge \sigma_u}\phi(i,
\boldsymbol{V}_{t-r}^\dag\boldsymbol{f}(z_u(r))) \mathrm{d}r\\
=&\int_0^t \sum_{i\in S}\sum_{u\in\mathcal{T},z_u=i}\phi(i,
\boldsymbol{V}_{t-r}^\dag\boldsymbol{f}(z_u(r))) \boldsymbol{1}_{\{r\in [t\wedge \tau_u,t\wedge \sigma_u)\}}\mathrm{d}r\\
 =& \int_0^t \left\langle \boldsymbol{\phi}(\boldsymbol{V}^\dag_{t-r}), \boldsymbol{Z}_r\right\rangle \mathrm{d}r.
\end{split}
\end{equation*}
\end{proof}
Observe that the processes $\boldsymbol{I}^{\mathbb{N}^\dag}= (\boldsymbol{I}^{\mathbb{N}^\dag}_t,t\geq0)$, $\boldsymbol{I}^{\mathbb{P}^\dag}=(\boldsymbol{I}^{\mathbb{P}^\dag}_t,t\geq0)$ and $\boldsymbol{I}^{\eta}=(\boldsymbol{I}^{\eta}_t,t\geq0)$ are initially zero-valued  $\widehat{\mathbb{P}}_{(\boldsymbol{\mu},\boldsymbol{\nu})}$-a.s. 
In order to study the rest of the immigration along the backbone we have the following result.
\begin{lemma}\label{lemma2}
	Suppose that $\boldsymbol{f},\boldsymbol{h}\in \mathcal{B}(\mathbb{R}^d)^\ell$ and $\boldsymbol{g}_s(x)\in\mathcal{B}(\R\times\mathbb{R}^d)^\ell$.
Define the vectorial function $\mathrm{e}^{- \boldsymbol{W}_t(x)}=(\mathrm{e}^{- {W}^{(1)}_t(x)},\cdots,\mathrm{e}^{- {W}^{(\ell)}_t(x)})$ as follows
	\begin{align*}
\mathrm{e}^{- {W}^{(i)}_t(x)}:=	\widehat{\mathbb{E}}_{(\boldsymbol{\mu},\boldsymbol{e}_i \delta_x)}\left[ \exp \left\lbrace -\langle \boldsymbol{f},\boldsymbol{I}_t^\eta\rangle-\langle \boldsymbol{h},\boldsymbol{Z}_t\rangle -\int_{0}^{t}\langle \boldsymbol{g}_{t-s},\boldsymbol{Z}_s\rangle \mathrm{d}s\right\rbrace\right].
	\end{align*}
Then, $\mathrm{e}^{- \boldsymbol{W}_t(x)}$ is a locally bounded  solution to the integral system

\begin{equation}\label{eq:inteq}
\mathrm{e}^{-W^{(i)}_t(x)}=\mathtt{P}^{(i)}_t\mathrm{e}^{-h_i(x)}+\frac{1}{w_i}\int_0^t \mathrm{d}s \int_{\R^d}   \left[H^{(i)}_{t-s}\left( y, \boldsymbol{w}\cdot \mathrm{e}^{-\boldsymbol{W}_{t-s}(y)}\right)-w_ig_{t-s}^i(y) \mathrm{e}^{-W^{(i)}_{t-s}(y)}\right] \mathtt{P}^{(i)}_s(x,\ud y),
\end{equation}
	where 
	\begin{equation}\label{H}
	\begin{split}
	H_{s}^{(i)}&(x,\boldsymbol{ \theta}) = [ \boldsymbol{\theta},{\boldsymbol{B}^{\dag}}\boldsymbol{e}_i]+\beta_i\theta_i^2+\int_{\mathbb{R}_+^\ell}\left( \mathrm{e}^{[ \boldsymbol{\theta},\boldsymbol{y}]}-1-\theta_iy_i\right)\mathrm{e}^{-[ \boldsymbol{w}+\boldsymbol{V}^\dag_s\boldsymbol{f}(x),\boldsymbol{y}]}\Pi(i,\mathrm{d}\boldsymbol{y}).
	\end{split}
	\end{equation}
	In the latter formula  $\boldsymbol{B}^\dag$ is given by \eqref{Bdag} and $\boldsymbol{V}^\dag_t\boldsymbol{f}$ is the unique solution to \eqref{semi-cond}.
\end{lemma}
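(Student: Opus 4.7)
The plan is to derive \eqref{eq:inteq} by a first-branching decomposition of $\boldsymbol{Z}$ started from the single particle $\boldsymbol{e}_i\delta_x$. The initial particle moves as $\xi^{(i)}$ and, independently, branches after an $\mathrm{Exp}(q_i)$-distributed time $\sigma$; I would split the expectation defining $\mathrm{e}^{-W^{(i)}_t(x)}$ according to $\{\sigma>t\}$ versus $\{\sigma = r \leq t\}$ and invoke the strong Markov/branching property of $\boldsymbol{Z}$ at $\sigma$, together with the description of the branch-point immigration.

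On $\{\sigma>t\}$ the backbone equals $\boldsymbol{e}_i\delta_{\xi^{(i)}_t}$ and $\boldsymbol{I}^\eta_t\equiv\boldsymbol{0}$, giving the Feynman--Kac contribution $\mathrm{e}^{-q_it}\,\mathbb{E}^{(i)}_x\bigl[\exp\{-h_i(\xi^{(i)}_t)-\int_0^t g^i_{t-s}(\xi^{(i)}_s)\,\mathrm{d}s\}\bigr]$. On $\{\sigma=r\}$, conditional on the offspring tuple $\boldsymbol{j}\sim p^{(i)}_{\boldsymbol{j}}$ and on the branch-point immigrant mass $\boldsymbol{Y}\sim\eta^{(i)}_{\boldsymbol{j}}$, the branching property of $\boldsymbol{Z}$, the identity $\mathbb{E}^\dag_{\boldsymbol{y}\delta_z}[\mathrm{e}^{-\langle\boldsymbol{f},\boldsymbol{X}_{t-r}\rangle}]=\mathrm{e}^{-[\boldsymbol{y},\boldsymbol{V}^\dag_{t-r}\boldsymbol{f}(z)]}$ from Proposition \ref{proposition2}, and the definition of $W^{(k)}_{t-r}$ factor the post-$\sigma$ contribution into
\[
\mathrm{e}^{-[\boldsymbol{Y},\boldsymbol{V}^\dag_{t-r}\boldsymbol{f}(\xi^{(i)}_r)]}\prod_{k\in S}\bigl(\mathrm{e}^{-W^{(k)}_{t-r}(\xi^{(i)}_r)}\bigr)^{j_k}.
\]
Writing $\boldsymbol{\theta}(y):=\boldsymbol{w}\cdot\mathrm{e}^{-\boldsymbol{W}_{t-r}(y)}$ and averaging this against $p^{(i)}_{\boldsymbol{j}}\eta^{(i)}_{\boldsymbol{j}}(\mathrm{d}\boldsymbol{y})$ in the form \eqref{eq:eta}, the multinomial identity \eqref{eq:binom} collapses the sum into $\frac{1}{w_iq_i}\tilde H^{(i)}_{t-r}(\xi^{(i)}_r,\boldsymbol{\theta}(\xi^{(i)}_r))$, where $\tilde H^{(i)}$ coincides with the expression \eqref{H} for $H^{(i)}$ except that the linear term $[\boldsymbol{\theta},\boldsymbol{B}^\dag\boldsymbol{e}_i]$ is replaced by $\sum_{k\neq i}B_{ki}\theta_k$.

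Putting the two cases together yields a Duhamel-type identity for $\mathrm{e}^{-W^{(i)}_t}$ with respect to the semigroup $\mathrm{e}^{-q_it}\mathtt{P}^{(i)}_t$ of $\xi^{(i)}$ killed at rate $q_i$, carrying the Feynman--Kac potential $g^i$. To reach the unkilled form \eqref{eq:inteq}, I would undo the killing via the standard Duhamel/variation-of-parameters step, rewriting $\mathrm{e}^{-q_it}\mathtt{P}^{(i)}_t$ in terms of $\mathtt{P}^{(i)}_t$ alone at the cost of an extra linear integrand $-q_i\mathrm{e}^{-W^{(i)}_{t-s}}=-q_i\theta_i/w_i$. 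The decisive algebraic check, closing the gap between $\tilde H^{(i)}$ and $H^{(i)}$, is the identity
\[
\tilde H^{(i)}(\boldsymbol{\theta}) - q_i\theta_i \;=\; H^{(i)}(\boldsymbol{\theta}),
\]
which reduces to $B^\dag_{ii}=-q_i$. This in turn is immediate from Proposition \ref{finite extinction} ($\boldsymbol{\psi}(\boldsymbol{w})=\boldsymbol{0}$) together with the definitions \eqref{brate} of $\boldsymbol{q}$ and \eqref{Bdag} of $\boldsymbol{B}^\dag$: differentiating $\psi(i,\cdot)$ at $\boldsymbol{w}$ yields $q_i=-B_{ii}+2\beta_iw_i+\int(1-\mathrm{e}^{-[\boldsymbol{w},\boldsymbol{y}]})y_i\Pi(i,\mathrm{d}\boldsymbol{y})=-B^\dag_{ii}$.

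Local boundedness of $\mathrm{e}^{-W^{(i)}_t}\in[0,1]$ is immediate from its definition as a Laplace functional. The main obstacle I anticipate is this algebraic identification between the branching sum and $H^{(i)}$, in particular matching the diagonal $\theta_iB^\dag_{ii}$ contribution with the killing rate via $B^\dag_{ii}=-q_i$, together with the careful justification --- using the integrability of $\Pi(i,\mathrm{d}\boldsymbol{y})$ and the bound $\theta_k\leq w_k$ --- of the interchanges of sum, integral and expectation used in the collapse of the branching sum.
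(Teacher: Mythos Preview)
Your approach is correct and essentially identical to the paper's proof: a first-branching decomposition of $\boldsymbol{Z}$, collapsing the branching sum against $p^{(i)}_{\boldsymbol{j}}\eta^{(i)}_{\boldsymbol{j}}$ via the multinomial identity, and then removing the exponential killing (the paper invokes Proposition~2.9 of Li \cite{Z} for this Duhamel step) to identify the integrand with $H^{(i)}$ using $B^\dag_{ii}=-q_i$. One minor remark: the relation $B^\dag_{ii}=-q_i$ follows directly from the definitions \eqref{brate} and \eqref{Bdag} and does not actually require $\boldsymbol{\psi}(\boldsymbol{w})=\boldsymbol{0}$.
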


It is important to note that $\boldsymbol{W}$ depends on the functions $\boldsymbol{f}, \boldsymbol{h}$ and $\boldsymbol{g}$ but for simplicity on exposition we suppress this dependency.
\begin{proof}
Recall that $\boldsymbol{Z}$ is a multitype branching diffusion, where the motion of each particle with type $i\in S$ is given by the semigroup $\mathtt{P}^{(i)}$ and its branching generator  is given by \eqref{brate}.  For simplicity, we denote by $\mathtt{P}^{(i)}_x$  the law of the diffusion $\xi^{(i)}$ starting at $x$.
By conditioning on  the time of the first branching event of $\boldsymbol{Z}$ we get
\begin{equation*}
\begin{split}
\mathrm{e}^{- {W}^{(i)}_t(x)}&=\mathtt{E}^{(i)}_x\left[\mathrm{e}^{-q_i t}\mathrm{e}^{-\int_0^tg^i_{t-r}(\xi^{(i)}_r)\mathrm{d}r}\mathrm{e}^{-h_i(\xi^{(i)}_t)} \right]\\
&+\mathtt{E}^{(i)}_x\left[\int_0^tq_i   \mathrm{e}^{-q_i s}\mathrm{e}^{-\int_0^sg^i_{s-r}(\xi^{(i)}_r)\mathrm{d}r}\sum_{\boldsymbol{j}\in\mathbb{N}^\ell}p_{\boldsymbol{j}}^{(i)} \mathrm{e}^{-\sum_{k\in S}j_k W_{t-s}^{(k)}(\xi^{(i)}_s)}\int_{\mathbb{R}_+^\ell}\eta_{\boldsymbol{j}}^{(i)}(\mathrm{d}\boldsymbol{y})\mathrm{e}^{-[  \boldsymbol{V}^{\dag}_{t-s}\boldsymbol{f}(\xi^{(i)}_t),\boldsymbol{y}]}\mathrm{d}s\right],
\end{split}
\end{equation*}
where   $\boldsymbol{j}=(j_1,\dots,j_\ell)$. On the other hand, by  Proposition 2.9 in  \cite{Z}, we see that $\mathrm{e}^{- {W}^{(i)}_t(x)}$ also satisfies
\begin{equation*}
\begin{split}
	\mathrm{e}^{- {W}^{(i)}_t(x)}=&\mathtt{E}^{(i)}_x\left[\mathrm{e}^{-h_i(\xi^{(i)}_t)} \right]-\mathtt{E}^{(i)}_x\left[\int_0^tq_i\mathrm{e}^{-{W}^{(i)}_{t-s}(x)} \ud s \right]-\mathtt{E}^{(i)}_x\left[\int_0^tg_{t-s}^i(\xi_s^{(i)})\mathrm{e}^{-{W}^{(i)}_{t-s}(x)} \ud s \right]\\
	&\hspace{2cm}+\mathtt{E}^{(i)}_x\left[\int_0^tq_i   \sum_{\boldsymbol{j}\in\mathbb{N}^\ell}p_{\boldsymbol{j}}^{(i)} \mathrm{e}^{-\sum_{k\in S}j_k W_{t-s}^{(k)}(\xi^{(i)}_s)}\int_{\mathbb{R}_+^\ell}\eta_{\boldsymbol{j}}^{(i)}(\mathrm{d}\boldsymbol{y})\mathrm{e}^{-[  \boldsymbol{V}^{\dag}_{t-s}\boldsymbol{f}(\xi^{(i)}_t),\boldsymbol{y}]}\mathrm{d}s\right].
\end{split}
\end{equation*}
 By substituting the definitions  of $p^{(i)}_{\boldsymbol{j}}$ and $\eta^{(i)}_{\boldsymbol{j}}$ (see \eqref{p^i definition} and \eqref{eq:eta}), we get that for all $x\in \R^d$ 
\begin{equation*}
\begin{split}
R(x)&:=\sum_{\boldsymbol{j}\in\mathbb{N}^\ell}p_{\boldsymbol{j}}^{(i)} \mathrm{e}^{-[ \boldsymbol{j},\boldsymbol{W}_{t-s}(x)]}\int_{\mathbb{R}_+^\ell}\eta_{\boldsymbol{j}}^i(\mathrm{d}\boldsymbol{y})
\mathrm{e}^{-[  \boldsymbol{V}^{\dag}_{t-s}\boldsymbol{f}(x) ,\boldsymbol{y}]} \\
&= 
\frac{1}{w_iq_i}\sum_{\boldsymbol{j}\in\mathbb{N}^\ell}\Bigg[ \beta_i w_i^2 \mathrm{e}^{-[ \boldsymbol{j},\boldsymbol{W}_{t-s}(x)]}\boldsymbol{1}_{\{\boldsymbol{j}=2\boldsymbol{e}_i\}} \Bigg.\\
&\hspace{.5cm}+\left(B_{ki} w_k \mathrm{e}^{-[ \boldsymbol{j},\boldsymbol{W}_{t-s}(x)]}
+\int_{\mathbb{R}_+^\ell}w_k y_k \mathrm{e}^{-[ \boldsymbol{w},\boldsymbol{y}]}\mathrm{e}^{-[ \boldsymbol{j},\boldsymbol{W}_{t-s}(x)]}\mathrm{e}^{-[ \boldsymbol{V}^{\dag}_{t-s}\boldsymbol{f}(x),\boldsymbol{y}]}\Pi(i,\mathrm{d}\boldsymbol{y})\right)\boldsymbol{1}_{\{\boldsymbol{j}=\boldsymbol{e}_k\}}\Ind{ k\neq i} \\
&\hspace{2.5cm}\left.+\int_{\mathbb{R}_+^\ell}\frac{(w_1 y_1)^{j_1}\dots(w_\ell y_\ell)^{j_\ell}}{j_1!\dots j_\ell!}\mathrm{e}^{-[ \boldsymbol{w},\boldsymbol{y}]} \mathrm{e}^{-[ \boldsymbol{j},\boldsymbol{W}_{t-s}(x)]}\mathrm{e}^{-[ \boldsymbol{V}^{\dag}_{t-s}\boldsymbol{f}(x),\boldsymbol{y}]}\Pi(i,\mathrm{d}\boldsymbol{y})\boldsymbol{1}_{\{j_1+\dots+j_\ell\geq 2\}} \right]\\
=& \frac{1}{w_iq_i}\left[ \beta_i \left(w_i \mathrm{e}^{-W^{(i)}_{t-s}(x)}\right)^2+\sum_{k\in S,k\neq i}  \mathrm{e}^{-W^{(k)}_{t-s}(x)}\left(B_{ki} w_k  + \int_{\mathbb{R}_+^\ell}w_k y_k \mathrm{e}^{-[ \boldsymbol{w},\boldsymbol{y}]}\mathrm{e}^{-[ \boldsymbol{V}^{\dag}_{t-s}\boldsymbol{f}(x),\boldsymbol{y}]}\Pi(i,\mathrm{d}\boldsymbol{y})\right)
 \right.\\
&\hspace{6.5cm}\left.+\int_{\mathbb{R}_+^\ell}\sum_{n\geq 2}\frac{[\boldsymbol{w}\cdot\mathrm{e}^{- \boldsymbol{W}_t(x)}, \boldsymbol{y}]^n}{n!}\mathrm{e}^{-[ \boldsymbol{w},\boldsymbol{y}]} \mathrm{e}^{-[ \boldsymbol{V}^{\dag}_{t-s}\boldsymbol{f}(x),\boldsymbol{y}]}\Pi(i,\mathrm{d}\boldsymbol{y})
 \right],
\end{split}
\end{equation*}
where in the last row we have used (\ref{eq:binom}).
By merging the two integrals, we get
\begin{equation*}
	\begin{split}
R(x)=&	\frac{1}{w_iq_i}\left[ \beta_i \left(w_i \mathrm{e}^{-W^{(i)}_{t-s}(x)}\right)^2+\sum_{k\in S,k\neq i} B_{ki} w_k \mathrm{e}^{-W^{(k)}_{t-s}(x)}
\right.\\
&\hspace{3cm}\left.+\int_{\mathbb{R}_+^\ell}\left( \mathrm{e}^{[\boldsymbol{w}\cdot\mathrm{e}^{- \boldsymbol{W}_t(x)}, \boldsymbol{y}]}-1-w_i\mathrm{e}^{-W^{(k)}_{t-s}(x)}y_i\right)\mathrm{e}^{-[ \boldsymbol{w}+ \boldsymbol{V}^{\dag}_{t-s}\boldsymbol{f}(x),\boldsymbol{y}]}\Pi(i,\mathrm{d}\boldsymbol{y})
\right].
	\end{split}
\end{equation*}
 So, putting the pieces together and using   the definitions  of $q_i$, $\boldsymbol{B}^\dag$ and $H^{(i)}$, (see identities \eqref{brate},\eqref{Bdag} and \eqref{H}) we deduce that 
\begin{equation*}
\begin{split}
\mathrm{e}^{- {W}^{(i)}_t(x)}=\mathtt{E}^{(i)}_x\left[\mathrm{e}^{-h_i(\xi^{(i)}_t)} -\int_0^tg_{t-s}^i(\xi_s^{(i)})\mathrm{e}^{-{W}^{(i)}_{t-s}(\xi_s^{(i)})} \ud s +\frac{1}{w_i}\int_0^t  H_{t-s}^{(i)}(\xi^{(i)}_s,\boldsymbol{w}\cdot\mathrm{e}^{- \boldsymbol{W}_t(\xi^{(i)}_s)})\mathrm{d}s\right],
\end{split}
\end{equation*}
as stated. Therefore, $\mathrm{e}^{- \boldsymbol{W}_t(x)}$ satisfies  \eqref{eq:inteq}.
\end{proof}

\begin{proof}[Proof of Theorem \ref{thm1}] 
Since $\widetilde{\boldsymbol{X}}$ is an independent copy of $\boldsymbol{X}$ under $\p_{\boldsymbol{\mu}}^\dag$, it is enough to show that for  $\boldsymbol{\mu}\in\mathcal{M}(\mathbb{R}^d)^\ell$, $\boldsymbol{\nu}\in\mathcal{M}_a(\mathbb{R}^d)^\ell$, $\boldsymbol{f},\boldsymbol{h}\in \mathcal{B}^+(\mathbb{R}^d)^\ell$, the vectorial function $\mathrm{e}^{-\boldsymbol{U}^{(\boldsymbol{f})}_t\boldsymbol{h}(x)}$ defined by   
\[
\mathrm{e}^{-{U}^{(\boldsymbol{f},i)}_t\boldsymbol{h}(x)}:=\widehat{\mathbb{E}}_{\boldsymbol{\mu},\boldsymbol{e}_i \delta_x}\left[ \mathrm{e}^{-\langle \boldsymbol{f},\boldsymbol{I}^{\mathbb{N}^\dag}+\boldsymbol{I}^{\mathbb{P}^\dag}+\boldsymbol{I}^{\eta}_t\rangle-\langle \boldsymbol{h},\boldsymbol{Z}_t\rangle}\right],
\]
 is a solution to  \eqref{cthm1} and that this solution is unique.  By its definition, it is clear that $\mathrm{e}^{-\boldsymbol{U}^{(\boldsymbol{f})}_t\boldsymbol{h}(x)}\in[0,1]^\ell$ for all $x\in \R^d$ and $t\geq 0$. On the other hand from Lemma \ref{lemma1}, we observe that 
\[
\mathrm{e}^{-{U}^{(\boldsymbol{f},i)}_t\boldsymbol{h}(x)}=\widehat{\mathbb{E}}_{\boldsymbol{\mu},\boldsymbol{e}_i \delta_x}\left[ \exp\left\{-\langle \boldsymbol{f},\boldsymbol{I}^{\eta}_t\rangle-\langle \boldsymbol{h},\boldsymbol{Z}_t\rangle -\int_0^t \langle \boldsymbol{\phi}(\boldsymbol{V}^\dag_{t-r}\boldsymbol{f}), \boldsymbol{Z}_r\rangle \mathrm{d}r\right\}\right].
\]

Therefore  Lemma \ref{lemma2} implies that the vectorial function $\mathrm{e}^{-\boldsymbol{U}^{(\boldsymbol{f})}_t\boldsymbol{h}(x)}$  satisfies
\begin{equation*}
\begin{split}
\mathrm{e}^{-{U}^{(\boldsymbol{f},i)}_t\boldsymbol{h}(x)}=\mathtt{E}^{(i)}_x\left[\mathrm{e}^{-h_i(\xi^{(i)}_t)} +\frac{1}{w_i}\int_0^t  \Bigg(H_{t-s}^{(i)}(\xi^{(i)}_s,\boldsymbol{w}\cdot\mathrm{e}^{-\boldsymbol{U}^{(\boldsymbol{f},i)}_{t-s}\boldsymbol{h}(\xi^{(i)}_s)})-\phi(i,
\boldsymbol{V}_{t-r}^\dag\boldsymbol{f}(\xi_s^{(i)}))w_i\mathrm{e}^{-{U}^{(\boldsymbol{f},i)}_{t-s}\boldsymbol{h}(\xi^{(i)}_s)}\Bigg) \ud s \right],
\end{split}
\end{equation*}
where $H^{(i)}$ is given as in \eqref{H}. Using the definitions of $\boldsymbol{\psi}^{\dag},\ \boldsymbol{\phi} $ and  $H$  (see identities \eqref{phi} \eqref{psistar} and \eqref{H}), we observe  for all $i\in S$, $x\in\R^d$ and  $\boldsymbol{\theta}\in \R^l_+$, that
\begin{equation*}
\begin{split}
H^{(i)}_{t}(x,\boldsymbol{\theta})-\phi(i,\boldsymbol{V}^\dag_{t}\boldsymbol{f}(x))\theta_i =\psi^{\dag}\left(i, -\boldsymbol{\theta}+\boldsymbol{V}^\dag_{t}\boldsymbol{f}(x)\right)-\psi^{\dag}(i,\boldsymbol{V}^\dag_{t}\boldsymbol{f}(x)).
\end{split}
\end{equation*}
Therefore,  $\mathrm{e}^{-\boldsymbol{U}^{(\boldsymbol{f})}_t\boldsymbol{h}(x)}$  is a solution to  \eqref{cthm1}. 

In order to finish the proof, we show  that the solution to \eqref{cthm1} is unique.  Our arguments use  Gronwall's lemma and similar ideas to  those used in the monograph of Li \cite{Z} and in Proposition \ref{Proposition1}. With this purpose in mind, we first deduce some additional inequalities. Recall that the function $\psi^\dag(i,\boldsymbol{\theta})$ defined in \eqref{psistar} is a branching mechanism. Using similar notation as in  Proposition \ref{Proposition1}, we introduce the operator
$${\Psi}^\dag(x,i,f)=\psi^\dag(i,(f(x,1),\cdots,f(x,\ell))),$$
for  ${f}\in \mathcal{B}( \R^d\times S)$, and observe that it satisfies identity (2.26) in \cite{Z}. Therefore, following line by line the arguments in the proof of Proposition 2.20 in \cite{Z}, we may deduce that ${\Psi}^\dag$ satisfies Condition 2.11 in \cite{Z}. In other words, for all $a\geq 0$, there exists $L_a>0$ such that
\begin{equation}\label{condition 2.10}
\sup_{(x,i)\in \R^d\times S}|{\Psi}^\dag(x,i,f)-{\Psi}^\dag(x,i,g)|\leq L_a \|f-g\|,\qquad \textrm{for }\quad f,g\in \mathcal{B}_a(\R^d\times S),
\end{equation}
where $\|f\|:=\sup_{(x,i)\in \R^d\times S}|f(x,i)|$ and $\mathcal{B}_a(\R^d\times S):=\{f\in \mathcal{B}(\R^d\times S):\|f\|\leq a\}$.

On the other hand by Proposition 2.21 in \cite{Z},  for all $ f\in \mathcal{B}( \R^d\times S)$,  there exists $t\mapsto \mathtt{V}^\dag_tf$ a unique locally bounded positive solution to
\begin{equation*}
\mathtt{V}^\dag_tf(x, i)=\mathtt{T}_t f(x,i)-\int_0^t\ud s\int_{\R^d\times S}\Psi^\dag(y,j,\mathtt{V}^\dag_{t-s}{f})\mathtt{T}_s(x,i,\ud (y,j)),
\end{equation*}
where the semigroup $\mathtt{T}_t$ is given as in \eqref{ttdefinition}. Moreover, by Proposition 2.14 in \cite{Z}, for all $T>0$ there exists $C(T)$ such that 
\begin{equation*}
\sup_{0\leq s\leq T}\sup_{(x,i)\in \R^d\times S}|\mathtt{V}^\dag_sf(x, i)
|\leq C(T)\|f\|.
\end{equation*}
Hence  using the homeomorphism between $\mathcal{B}(\R^d)^\ell$ and  $\mathcal{B}(\R^d\times S)$ which was defined in the proof of Proposition \ref{Proposition1} (i.e. for  $\boldsymbol{f}\in\mathcal{B}(\R^d)^\ell$, we define $f(x,i)=f_i(x)$) and  the previous inequality, we deduce that 
\begin{equation}\label{bounded Vdag}
\sup_{0\leq s\leq T}\sup_{x\in \R^d}\sup_{i\in S}\Big|V^{\dag,(i)}_s\boldsymbol{f}(x)
\Big|\leq C(T)\|\boldsymbol{f}\|\qquad \qquad\textrm{for }\quad \boldsymbol{f}\in\mathcal{B}^+(\mathbb{R}^d)^\ell,
\end{equation}
where $\|\boldsymbol{f}\|=\sup_{x\in \R^d}\sup_{i\in S}|f_i(x)|$ and $\boldsymbol{V}^\dag\boldsymbol{f}$ is given by \eqref{semi-cond}.

Next, we take  $\mathrm{e}^{-\boldsymbol{W}_t(x)}$  and $\mathrm{e}^{-\widetilde{\boldsymbol{W}}_t(x)}$,    two solutions of \eqref{cthm1},  and observe that  for all $i\in S$

\begin{equation*}
\begin{split}
w_i\mathrm{e}^{-{W}^{(i)}_t(x)}-w_i\mathrm{e}^{-\widetilde{W}^{(i)}_t(x)}=\int_0^t\ud s\int_{\R^d} &  \left[\psi^\dag\left(i, -\boldsymbol{w}\cdot \mathrm{e}^{-\boldsymbol{W}_{t-s}(y)}
+\boldsymbol{V}^{\dag}_{t-s}\boldsymbol{f}(y)\right)\right.\\
&\hspace{1.5cm}\left.-\psi^\dag\left(i,-\boldsymbol{w}\cdot \mathrm{e}^{-\widetilde{\boldsymbol{W}}_{t-s}(y)}+\boldsymbol{V}^{\dag}_{t-s}\boldsymbol{f}(y)\right)\right] \mathtt{P}^{(i)}_s(x,\ud y).
\end{split}
\end{equation*}
Since $\mathrm{e}^{-\boldsymbol{W}_t(x)}\in [0,1]^\ell$ and $\boldsymbol{V}^\dag f$ satisfies \eqref{bounded Vdag}, we have, for all $s\leq T$, that
$$\Big\|-\boldsymbol{w}\cdot \mathrm{e}^{-\boldsymbol{W}_s(x)}+\boldsymbol{V}^\dag_s f(x)\Big\|\leq \|\boldsymbol{w}\|+C(T)\|f\|:=a(T),$$
 and the same inequality holds for $\mathrm{e}^{-\widetilde{\boldsymbol{W}}_t(x)}$. Therefore, by the definition of $\Psi^\dag$ and \eqref{condition 2.10}, there exists $L_T>0$ such that we obtain, for all $t\leq T$, the following inequality
\begin{equation*}
\begin{split}
\Big|w_i\mathrm{e}^{-{W}^{(i)}_t(x)}-w_i\mathrm{e}^{-\widetilde{W}^{(i)}_t(x)}\Big|\leq\int_0^t\ud s\int_{\R^d}  L_T  \Big\|\boldsymbol{w}\cdot \mathrm{e}^{-\boldsymbol{W}_{t-s}(x)}-\boldsymbol{w}\cdot \mathrm{e}^{-\widetilde{\boldsymbol{W}}_{t-s}(x)}\Big\| \mathtt{P}^{(i)}_s(x,\ud y).
\end{split}
\end{equation*}
The latter implies the following inequality
$$\Big\|\boldsymbol{w}\cdot \mathrm{e}^{-\boldsymbol{W}_{t}(x)}-\boldsymbol{w}\cdot \mathrm{e}^{-\widetilde{\boldsymbol{W}}_{t}(x)}\Big\| \leq L_T \int_0^t   \Big\|\boldsymbol{w}\cdot \mathrm{e}^{-\boldsymbol{W}_{s}(x)}-\boldsymbol{w}\cdot \mathrm{e}^{-\widetilde{\boldsymbol{W}}_{s}(x)}\Big\|\ud s, \qquad \mbox{ for all } t\leq T.$$
 Thus by Gronwall's inequality, we deduce that 
	$$\boldsymbol{w}\cdot \mathrm{e}^{-\boldsymbol{W}_{s}(x)}=\boldsymbol{w}\cdot \mathrm{e}^{-\widetilde{\boldsymbol{W}}_{s}(x)} \qquad \mbox{ for all }\quad s\leq T.$$
Finally, because $T>0$ was  arbitrary, we get the uniqueness of the solution to  \eqref{cthm1}. 
\end{proof}

\begin{proof}[Proof of Theorem \ref{mainthm}]
Recall that $((\boldsymbol{\Lambda},\boldsymbol{Z}),\widehat{\mathbb{P}}_{(\boldsymbol{\nu},\boldsymbol{\mu})})$  is a Markov process and that $\widehat{\mathbb{P}}_{\boldsymbol{\mu}}$ is defined as $\widehat{\mathbb{P}}_{(\widetilde{\boldsymbol{\nu}},\boldsymbol{\mu})}$,  where $\widetilde{\boldsymbol{\nu}}$  is such that  $\widetilde{\nu}_i$ is a Poisson random measure with intensity $w_i\mu_i$, for all  $i\in S$. Therefore, for $s, t\ge 0$, we see that 
	\begin{equation*}
	\widehat{\mathbb{E}}_{\boldsymbol{\mu}}\Big[f(\boldsymbol{\Lambda}_{t+s}) \Big|(\boldsymbol{\Lambda}_u, u\leq s)\Big]=\widehat{\mathbb{E}}_{\widetilde{\boldsymbol{\nu}},\boldsymbol{\mu}}\Big[f(\boldsymbol{\Lambda}_{t+s}) \Big|(\boldsymbol{\Lambda}_u, u\leq s)\Big]=\widehat{\mathbb{E}}_{(\boldsymbol{Z}_{s},\boldsymbol{\Lambda}_{s})}\Big[f(\boldsymbol{\Lambda}_{t})\Big].
	\end{equation*}
 	Then, in order to deduce that $(\boldsymbol{\Lambda},\widehat{\mathbb{P}}_{\boldsymbol{\mu}})$ is Markovian, we need to show that each coordinate of $\boldsymbol{Z}_t=(Z_t^1,\dots,Z_t^\ell)$ given $\boldsymbol{\Lambda}_t=(\Lambda_t^1,\dots,\Lambda_t^\ell)$ is a Poisson random measure with intensity $w_i\Lambda_t^i$. From Campbell's formula for Poisson random measures (see for instance Section 3.2 of \cite{kingman}), the latter  is equivalent to showing  that for all $\boldsymbol{h}\in \mathcal{B}^+(\mathbb{R}^d)^\ell$
\[
\widehat{\mathbb{E}}_{\boldsymbol{\mu}}\left[\left.\mathrm{e}^{-\langle \boldsymbol{h}, \boldsymbol{Z}_t\rangle}\right|\boldsymbol{\Lambda}_t\right]=\exp\left\{-\langle \boldsymbol{w}\cdot(\boldsymbol{1}-\mathrm{e}^{\boldsymbol{h}}),\boldsymbol{\Lambda}_t\rangle \right\},
\]
or equivalently,   that for all $\boldsymbol{f},\boldsymbol{h}\in  \mathcal{B}^+(\mathbb{R}^d)^\ell$
\begin{equation}\label{cond}
\widehat{\mathbb{E}}_{\boldsymbol{\mu}}\left[\mathrm{e}^{-\langle \boldsymbol{f},\boldsymbol{\Lambda}_t\rangle-\langle \boldsymbol{h},\boldsymbol{Z}_t\rangle}\right]=\widehat{\mathbb{E}}_{\boldsymbol{\mu}}\left[\mathrm{e}^{-\langle \boldsymbol{w}\cdot(\boldsymbol{1}-\mathrm{e}^{-\boldsymbol{h}})+\boldsymbol{f},\boldsymbol{\Lambda}_t\rangle}\right].
\end{equation}
Using   \eqref{laplceexpjoint} with $\widetilde{\boldsymbol{\nu}}$, we find
\[
\widehat{\mathbb{E}}_{\boldsymbol{\mu}}\left[\mathrm{e}^{-\langle \boldsymbol{f},\boldsymbol{\Lambda}_t\rangle-\langle \boldsymbol{h},\boldsymbol{Z}_t\rangle}\right]=\exp\left\{-\langle \boldsymbol{V}^\dag_t\boldsymbol{f}+\boldsymbol{w}\cdot(\boldsymbol{1}-\mathrm{e}^{-\boldsymbol{U}^{(\boldsymbol{f})}_t\boldsymbol{h}}),\boldsymbol{\mu}\rangle\right\}.
\]
Similarly,   considering \eqref{laplceexpjoint} again  with $\widetilde{\boldsymbol{\nu}}$,   $\widetilde{\boldsymbol{f}}=\boldsymbol{w}\cdot(\boldsymbol{1}-\mathrm{e}^{-\boldsymbol{h}})+\boldsymbol{f}$ and $\widetilde{\boldsymbol{h}}=\boldsymbol{0}$,  we get that 
\[
\widehat{\mathbb{E}}_{\boldsymbol{\mu}}\left[\mathrm{e}^{-\langle \boldsymbol{w}\cdot(\boldsymbol{1}-\mathrm{e}^{-\boldsymbol{h}})+\boldsymbol{f},\boldsymbol{\Lambda}_t\rangle}\right]=\exp\left\{-\left\langle \boldsymbol{V}^\dag_t(\boldsymbol{w}\cdot(\boldsymbol{1}-\mathrm{e}^{-\boldsymbol{h}})+\boldsymbol{f})+\boldsymbol{w}\cdot(\boldsymbol{1}-\mathrm{e}^{-\boldsymbol{U}^{(\boldsymbol{w}\cdot(\boldsymbol{1}-\mathrm{e}^{-\boldsymbol{h}})+\boldsymbol{f})}_t\boldsymbol{0}}),
\boldsymbol{\mu}\right\rangle\right\}.
\]
Hence, if we prove that for any $\boldsymbol{f},\boldsymbol{h}\in \mathcal{B}^+(\mathbb{R}^d)^\ell$, $x\in\mathbb{R}^d$, and $i\in S$, the following identity holds
\begin{equation}\label{cond2}
V_t^{\dag(i)}\boldsymbol{ f}(x)+w_i(1-\mathrm{e}^{-{U}^{(\boldsymbol{f},i)}_t\boldsymbol{h}(x)})=V_t^{\dag(i)} (\boldsymbol{w}\cdot(\boldsymbol{1}-\mathrm{e}^{-\boldsymbol{h}})+\boldsymbol{f})(x)+w_i\left(1-\mathrm{e}^{-{U}^{((\boldsymbol{w}\cdot(\boldsymbol{1}-\mathrm{e}^{-\boldsymbol{h}})+\boldsymbol{f}),i)}_t\boldsymbol{0}(x)}\right),
\end{equation}
we can deduce \eqref{cond}.  

In order to obtain \eqref{cond2}, we first observe that identities \eqref{semi-cond} and \eqref{cthm1} together with the definition of $ \boldsymbol{\psi}^\dag$ allow us to see that both left and right hand sides of \eqref{cond2} solve \eqref{inteq_u} with initial condition $\boldsymbol{f}+\boldsymbol{w}\cdot(\boldsymbol{1}-\mathrm{e}^{-\boldsymbol{h}})$. 
Since \eqref{inteq_u} has a unique solution, namely $\boldsymbol{V}_t(\boldsymbol{f}+\boldsymbol{w}\cdot(\boldsymbol{1}-\mathrm{e}^{-\boldsymbol{h}}))$, we conclude that \eqref{cond2} 
holds and it is equal to $V^{(i)}_t(\boldsymbol{f}+\boldsymbol{w}\cdot(\boldsymbol{1}-\mathrm{e}^{-\boldsymbol{h}}))(x)$. Hence,  we can  finally deduce that $(\boldsymbol{\Lambda}, \widehat{\mathbb{P}}_{\boldsymbol{\mu}})$ is a Markov process. Moreover, we have
\begin{equation*}
\widehat{\mathbb{E}}_{\boldsymbol{\mu}}\left[\mathrm{e}^{-\langle \boldsymbol{f},\boldsymbol{\Lambda}_t\rangle-\langle \boldsymbol{h},\boldsymbol{Z}_t\rangle}\right]=\mathrm{e}^{-\left\langle \boldsymbol{V}_t(\boldsymbol{f}+\boldsymbol{w}\cdot(\boldsymbol{1}-\mathrm{e}^{-\boldsymbol{h}})),\boldsymbol{\mu}\right\rangle}=\mathbb{E}_{\boldsymbol{\mu}}\left[\mathrm{e}^{-\langle \boldsymbol{f}+\boldsymbol{w}\cdot(\boldsymbol{1}-\mathrm{e}^{-\boldsymbol{h}}),\boldsymbol{X}_t\rangle}\right],
\end{equation*}
and if, in particular, we  take $\boldsymbol{h}=\boldsymbol{0}$ the above identity is reduced to
\begin{equation*}
\widehat{\mathbb{E}}_{\boldsymbol{\mu}}\left[\mathrm{e}^{-\langle \boldsymbol{f},\boldsymbol{\Lambda}_t\rangle}\right]=\mathbb{E}_{\boldsymbol{\mu}}\left[\mathrm{e}^{-\langle \boldsymbol{f},\boldsymbol{X}_t\rangle}\right].
\end{equation*}
This completes the proof.
\end{proof}

\bigskip
\noindent \textbf{Acknowledgements.} This project began while the authors were attending a Bath, UNAM, and CIMAT (BUC) workshop in Guanajuato, Mexico in May, 2016. The authors thank Andreas Kyprianou  and Victor Rivero for their roles in organising  this workshop. This research was supported by the Royal Society and CONACyT-MEXICO. DF is supported by a scholarship from the EPSRC Centre for Doctoral Training, SAMBa.  \bigskip


\begin{thebibliography}{99}
	\bibitem{BLP} \sc Barczy, M., Li, Z.,  and Pap, G. (2015). \rm  Stochastic differential equation with jumps for
	multi-type continuous state and continuous time
	branching processes with immigration.  {\it ALEA Lat. Am. J. Probab. Math. Stat.} {\bf 12}(1),  129 -- 169.
	
\bibitem{BP}  \sc Barczy, M. and Pap, G. (2016). \rm Asymptotic behavior of critical, irreducible multi-type continuous state and continuous time branching processes with immigration. {\it Stochastics and Dynamics}. {\bf 16}(4).



	\bibitem{BKM} \sc Berestycki, J., Kyprianou, A.E., Murillo-Salas, A. (2011). \rm The prolific backbone for supercritical superdiffusions. {\it Stoch. Proc. Appl.} {\bf 121}, 1315 -- 1331.
	
\bibitem{CSY} \sc Chen, Z-Q,  Ren, Y-X, . and Yang T. \rm Skeleton  decomposition and law of large numbers for supercritical superprocesses. Preprint 2017.

\bibitem{DW} \sc Duquesne, T., Winkel, M. (2007).\rm Growth of L\'evy trees.  {\it Probab. Theory Related Fields}.  {\bf 139},  313 -- 371.

	
	\bibitem{Dynb} \sc Dynkin, E.B. (2002). \rm Diffusions, Superdiffusion and Partial Differential Equations. 
	{\it American Mathematical Society, Colloquium Publications.} {\bf 50}. 
	
	\bibitem{EKW} \sc Eckhoff, M.,  Kyprianou, A.E., Winkel, M. (2015). \rm Spines, skeletons and the strong law of large numbers for superdiffusions. {\it Ann. Probab.} {\bf 43}, no. 5, 2545--2610. 
	
	\bibitem{EP} \sc{Engl\"ander, J., Pinsky, R.G. (1999).} {\rm On the construction and support properties of
measure-valued diffusions on $D\subset \mathbb{R}^d$ with spatially dependent branching.} {\it Ann. Probab.},
{\bf 27}, 684-730.
	

	\bibitem{EW} \sc Etheridge, A., Williams, D.R.E. (2003). \rm  A decomposition of the $(1+\beta)$-superprocess
conditioned on survival. {\it  Proc. Royal. Soc. Edin.} {\bf 133A}, 829--847.
	
		\bibitem{EO} \sc Evans, S.N., O'Connell, N. (1994). \rm Weighted occupation time for branching particle systems and a representation for the
		supercritical superprocess. {\it Canad. Math. Bull.} {\bf 37}, 187 -- 196.
		
		\bibitem{GL-M} \sc Gorostiza, L., L\'opez-Mimbela, J.A. (1990)  \rm The multitype measure branching process. {\it Adv. in Appl. Probab.} {\bf 22}(1), 49 -- 67.

		
		\bibitem{kingman} \sc Kingman, J. (1993). \rm {\it Poisson Processes.} Oxford University Press.
		

		
		
	\bibitem{KP} \sc Kyprianou, A.E., Palau, S. (2017). \rm Extinction properties of multi-type continuous-state branching processes. To appear in {\it Stochastic Process. Appl.}  
	
	\bibitem{KPR} \sc Kyprianou, A.E., Palau, S., Ren, Y. (2018).\rm Almost sure growth of supercritical multi-type continuous-state branching process. To appear in {\it ALEA Lat. Am. J. Probab. Math. Stat.}
	
	\bibitem{KPeR} \sc Kyprianou, A.E., P\'erez, J.L., S., Ren, Y. (2015). \rm The backbone decomposition for spatially dependent supercritical superprocesses. {\it S\'eminaire de Probabilit\'es XLVI,} 33--60.

	
	\bibitem{KR} \sc Kyprianou, A.E.,  Ren, Y-X. (2012) \rm Backbone decomposition for continuous-state
branching processes with immigration. {\it Statist. Probab. Lett.} {\bf 82}, 139--144.
	
	\bibitem{L92a} \sc Li, Z. (1992). \rm A note on the multitype measure branching process. {\it Adv. in Appl. Probab.} {\bf 24}(2), 496 -- 498
	
	\bibitem{MP} \sc Murillo-Salas, A.,  P\'erez, J.L. (2015). \rm  The backbone decomposition for superprocesses with non-local branching. {\it XI Symposium on Probability and Stochastic Processes, Progr. Probab.}, {\bf 69},  199--216, Birkhäuser/Springer, Cham.

	\bibitem{Z} \sc Li, Z. (2011). {\it Measure-Valued Branching Markov Processes}, 
	\rm Springer-Verlag Berlin Heidelberg.

	
	\bibitem{RSY} \sc Ren, Y-X, Song R. and Yang T. \rm Spine decomposition and L log L criterion for
superprocesses with non-local branching mechanisms. Preprint 2016.

\bibitem{SV} \sc Salisbury, T.,  Verzani, J. (1999). \rm On the conditioned exit measures of super Brownian
motion. {\it Probab. Theory Relat. Fields}  {\bf 115}, 237--285.

\end{thebibliography}
\end{document}